\newcommand{\degr}{\operatorname{deg}}
\newcommand{\disc}{\operatorname{disc}}
\newcommand{\ind}{\operatorname{ind}}
\newcommand{\dnd}{\nmid}
\newcommand{\TITLE}{Two Families of Monogenic $S_4$ Quartic Number Fields}
\newcommand{\TITLERUNNING}{}
\theoremstyle{plain}
\newtheorem{theorem}{Theorem}
\newtheorem{corollary}[theorem]{Corollary}
\theoremstyle{definition}
\newtheorem{definition}[theorem]{Definition}
\theoremstyle{remark}
\newtheorem{remark}[theorem]{Remark}
\newtheorem{example}[theorem]{Example}
\numberwithin{theorem}{section}
\newcommand{\tightoverset}[2]{%
  \mathop{#2}\limits^{\vbox to -.5ex{\kern-1.05ex\hbox{$#1$}\vss}}}
\def\Ocal{{\mathcal O}}
\newcommand{\FF}{\mathbb{F}}
\newcommand{\QQ}{\mathbb{Q}}
\newcommand{\ZZ}{\mathbb{Z}}
\newcommand{\inv}{^{-1}}
\title[\TITLERUNNING]{\vspace*{-1.3cm} \TITLE}
\date{\today}
\author[Hanson Smith]{Hanson Smith}
\address{%
Department of Mathematics, University of Colorado,
Campus Box 395, Boulder, Colorado 80309-0395}
\email{hanson.smith@colorado.edu}
\keywords{monogeneity, monogenicity, power integral bases, ring of integers, quartic fields}
\subjclass[2010]{11R04, 11R09 11R16}
\thanks{}
\begin{document}

\begin{abstract}
  Consider the integral polynomials $f_{a,b}(x)=x^4+ax+b$ and $g_{c,d}(x)=x^4+cx^3+d$. Suppose $f_{a,b}(x)$ and $g_{c,d}(x)$ are irreducible, $b\mid a$, and the integers $b$, $d$, $256d-27c^4$, and $\dfrac{256b^3-27a^4}{\gcd(256b^3,27a^4)}$ are all square-free. Using the Montes algorithm, we show that a root of $f_{a,b}(x)$ or $g_{c,d}(x)$ defines a monogenic extension of $\QQ$ and serves as a generator for a power basis of the ring of integers. In fact, we show monogeneity for slightly more general families. Further, we obtain lower bounds on the density of polynomials generating monogenic $S_4$ fields within the families $f_{b,b}(x)$ and $g_{1,d}(x)$. 
\end{abstract}

\maketitle

\section{Introduction and Overview of Results}

Let $K$ be a number field and let $\Ocal_K$ be its ring of integers. If there exists a monic irreducible polynomial $f(x)\in \ZZ[x]$ with a root $\theta$ such that $\ZZ[\theta]=\Ocal_K$, then we say $K$ is \emph{monogenic}. In other words, $K$ is monogenic if $\Ocal_K$ admits a power integral basis. A quantity related to monogeneity is the field index. The \emph{field index} is defined to be the pair-wise greatest common divisor $\gcd\limits_{\alpha\in \Ocal_K} \left[\Ocal_K:\ZZ[\alpha]\right]$. Note that $K$ can have field index 1 and still not be monogenic. Define the \emph{minimal index} to be $\min \limits_{\alpha\in \Ocal_K} \left[\Ocal_K:\ZZ[\alpha]\right]$. Monogeneity is equivalent to having minimal index equal to 1.
 
Many of the number fields we are most familiar with are monogenic. For example, all quadratic extensions and cyclotomic extensions are monogenic. An example of a non-monogenic field, due to Dedekind \cite{Dedekind}, is the field obtained by adjoining a root of $x^3-x^2-2x-8$ to $\QQ$. The problem of classifying monogenic number fields is often called \emph{Hasse's problem}, as it is believed to have been posed to the London Mathematical Society by Helmut Hasse in the 1960's. See the remark on page 193 of \cite{Hasse}. 

We can now state concise, less-general versions of our main results. The following are consequences of Theorems \ref{fmon} and \ref{f1} and Theorems \ref{gmon} and \ref{g1}, respectively.

\begin{corollary}\label{corf} Consider $f_{b,b}(x)=x^4+bx+b$ with $b\in \ZZ$ and let $\theta$ be a root. Suppose $b$ and $256-27b$ are square-free and $b\neq 3,5$. Then, $\QQ(\theta)$ is a monogenic $S_4$ quartic field and $\theta$ is a generator of a power integral basis. Further, at least 29.18\% of $b\in \ZZ$ satisfy these conditions.
\end{corollary}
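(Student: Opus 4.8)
The plan is to split the statement into its two assertions: (i) that the stated conditions on $b$ force $\QQ(\theta)$ to be a monogenic $S_4$ quartic field with $\theta$ a generator of a power integral basis, and (ii) that the set of $b\in\ZZ$ satisfying these conditions has lower density at least $29.18\%$. For (i), I would simply invoke Theorems \ref{fmon} and \ref{f1}: Theorem \ref{fmon} (applied with $a=b$, so that the hypothesis $b\mid a$ is automatic and the fraction $\frac{256b^3-27b^4}{\gcd(256b^3,27b^4)}$ specializes, after removing the common factor, to something governed by the square-freeness of $256-27b$) yields monogeneity and the power-basis claim, while Theorem \ref{f1} supplies irreducibility and the Galois group $S_4$, with $b\neq 3,5$ presumably being exactly the small exceptional values where the Galois group degenerates or $f_{b,b}$ factors. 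So part (i) is bookkeeping: check that the general hypotheses of those theorems reduce, under $a=b$, to "$b$ and $256-27b$ square-free, $b\neq 3,5$."

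The substance is part (ii), the density bound. Here I would work with the natural density of $b\in\ZZ$ (equivalently, count $|b|\le X$ and divide by $2X$, taking a liminf). Write the conditions as: $b$ square-free, $256-27b$ square-free, and $b\notin\{3,5\}$; the last excludes finitely many values and so does not affect density. The density of $b$ for which a single squarefree condition $h(b)$ squarefree holds, for a fixed linear (or suitable) polynomial $h$, is $\prod_p (1 - \rho_h(p^2)/p^2)$ where $\rho_h(p^2)$ counts residues mod $p^2$ with $p^2\mid h(b)$; for $h(b)=b$ this is the usual $1/\zeta(2)=6/\pi^2\approx 60.79\%$, and for $h(b)=256-27b$ it is the analogous Euler product. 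The real issue is that the two events are \emph{not} independent, so the density of the simultaneous condition is not the product. I would instead compute it directly via the Chinese Remainder Theorem as a single Euler product $\prod_p \bigl(1 - c_p/p^2\bigr)$ (or a small modification at $p=2,3$ because of the coefficient $27$ and the constant $256$), where $c_p$ is the number of residue classes $b \bmod p^2$ for which $p^2\mid b$ or $p^2 \mid 256-27b$. For $p\neq 2,3$ these two divisibility conditions cut out disjoint classes, giving $c_p = 2$; at $p=3$ only $b\equiv 0$ matters (since $3\nmid 256$), and at $p=2$ one must treat $256 = 2^8$ carefully — $b$ square-free already forbids $4\mid b$, and $256-27b\equiv -27b \equiv b \pmod 4$, so the $p=2$ local factor needs its own small computation. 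Finally I would need a standard sieve/inclusion–exclusion argument (a Brun- or Selberg-type sieve, or the elementary squarefree-sieve bound) to justify that the density equals this Euler product and that the error terms are $o(X)$; since the moduli are $p^2$ and the relevant polynomial is linear, the tail $\sum_{p>z} 1/p^2$ converges and this is routine.

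The one genuine obstacle is getting the numerical value $29.18\%$ to come out, i.e.\ evaluating (a lower bound for) the Euler product $\prod_p (1 - c_p/p^2)$ with the corrected local factors at $2$ and $3$. Concretely I expect the product to look like $L_2 \cdot L_3 \cdot \prod_{p\ge 5}\bigl(1 - 2/p^2\bigr)$ for explicit rational $L_2, L_3$, and I would bound it below by truncating at a moderate prime $P$ and estimating the tail $\prod_{p>P}(1-2/p^2) \ge 1 - 2\sum_{p>P}1/p^2 \ge 1 - 2\sum_{n>P} 1/n^2$. Choosing $P$ large enough (a few dozen primes) and computing the finite product on a computer gives a rigorous lower bound; one then checks it exceeds $0.2918$. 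I would present the local factors $L_2$ and $L_3$ explicitly, state the tail estimate as a lemma-free inline bound, and record the truncated numerical computation. The claim $b\neq 3,5$ only removes two integers and is absorbed harmlessly into the liminf.
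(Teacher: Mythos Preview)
Your plan for part (i) is essentially what the paper does: it states the corollary as an immediate consequence of Theorems \ref{fmon} and \ref{f1}, and the bookkeeping you describe (checking that for $a=b$ with $b$ square-free every prime dividing $\gcd(256b^3,27b^4)$ already satisfies condition (1) of Theorem \ref{fmon}, and that the quotient is controlled by $256-27b$) is exactly the right verification.

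For part (ii), however, your route diverges from the paper's. You propose computing the \emph{exact} density of $b$ with both $b$ and $256-27b$ square-free as an Euler product $L_2 L_3\prod_{p\ge 5}(1-2/p^2)$, then bounding it below by truncation. This is correct, and with the local factors you identify ($c_2=c_3=1$, $c_p=2$ for $p\ge 5$) the product is $(3/4)(8/9)\prod_{p\ge 5}(1-2/p^2)\approx 0.553$, which comfortably exceeds $29.18\%$ and in fact matches the empirical figure the paper reports in Remark \ref{data}. So your method proves more than is claimed, though at the cost of a sieve justification and a rigorous numerical truncation. (Your worry that the Euler product should ``come out'' to $29.18\%$ is misplaced: it does not, and need not.)

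The paper's argument is much cheaper. It quotes Prachar's theorem to get the density of square-free integers in the progression $\equiv 256 \pmod{27}$ as $\dfrac{6}{\pi^2}\bigl(1-\tfrac{1}{9}\bigr)^{-1}=\dfrac{27}{4\pi^2}$, reads this as the density of $b$ with $256-27b$ square-free, and then applies the one-line union bound
\[
\mathrm{dens}\{b:\ b,\ 256-27b\ \text{both square-free}\}\ \ge\ \frac{6}{\pi^2}-\left(1-\frac{27}{4\pi^2}\right)=\frac{51-4\pi^2}{4\pi^2}\approx 0.2918.
\]
So the paper trades sharpness for brevity: Prachar plus a crude $P(A\cap B)\ge P(A)-P(B^c)$ gives the stated constant exactly, with no Euler product or tail estimate needed. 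Your approach recovers the true density and would let one replace $29.18\%$ by roughly $55.3\%$.
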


\begin{corollary}\label{corg} Consider $g_{1,d}(x)=x^4+x^3+d$ with $d\in \ZZ$ and let $\tau$ be a root. Suppose $d$ and $256d-27$ are square-free and $d\neq -2$. Then $\QQ(\tau)$ is a monogenic $S_4$ quartic field and $\tau$ is a generator of a power integral basis. Further, at least 41.849\% of $d\in \ZZ$ satisfy these conditions.
\end{corollary}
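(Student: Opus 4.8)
\emph{Proof proposal.} I would split the corollary into two independent claims. The claim that $\QQ(\tau)$ is a monogenic $S_{4}$ quartic field with $\tau$ generating a power integral basis is the case $c=1$ of Theorems~\ref{gmon} and~\ref{g1}; unpacking it consists of three checks. \emph{Irreducibility of $g_{1,d}$}: a rational root is an integer $r$ with $d=-r^{3}(r+1)$, whence $r^{2}\mid d$, so squarefreeness of $d$ leaves only $r\in\{-1,0,1\}$, i.e.\ $d\in\{0,-2\}$; a factorization $(x^{2}+ax+b)(x^{2}+cx+e)$ over $\ZZ$ forces, after eliminating $c=1-a$, $e$, and $b$, the divisibility $2a-1\mid 1$, hence $a\in\{0,1\}$ and again $d=0$; so squarefree $d\ne-2$ makes $g_{1,d}$ irreducible and $[\QQ(\tau):\QQ]=4$. \emph{Monogeneity}: from $\disc(g_{1,d})=d^{2}(256d-27)$ and squarefreeness of $256d-27$, any prime dividing $256d-27$ but not $d$ appears only to the first power in the discriminant and so cannot divide $[\Ocal_{\QQ(\tau)}:\ZZ[\tau]]$; for a prime $p\mid d$ one has $\overline{g_{1,d}}=x^{3}(x+1)$ in $\FF_{p}[x]$, and Dedekind's criterion (equivalently the Montes algorithm used in the paper) shows $p\nmid[\Ocal_{\QQ(\tau)}:\ZZ[\tau]]$ precisely because $p^{2}\nmid d$; hence $\ZZ[\tau]=\Ocal_{\QQ(\tau)}$. \emph{Galois group}: $\disc(g_{1,d})=d^{2}(256d-27)$ is not a square, as $256d-27$ is squarefree and never $0$ or $1$; and the resolvent cubic $y^{3}-4dy-d$ is irreducible, since an integer root $s$ gives $d=s^{3}/(4s+1)$ with $\gcd(4s+1,s^{3})=1$, forcing $4s+1=\pm1$ and $d=0$; a nonsquare discriminant together with an irreducible resolvent cubic identifies the Galois group as $S_{4}$.

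For the density estimate, the admissible set is $\{d\in\ZZ : d\text{ and }256d-27\text{ are both squarefree}\}$ (the hypothesis $d\ne-2$ being vacuous, since $256(-2)-27=-539=-7^{2}\cdot 11$ is not squarefree). The squarefree integers have density $1/\zeta(2)=6/\pi^{2}$. For the linear form $256d-27$: since $\gcd(256,27)=1$ and $256d-27\equiv 1\pmod 4$ for every $d$, the prime $2$ imposes no constraint, while for each odd prime $p$ exactly one class $d\bmod p^{2}$ has $p^{2}\mid 256d-27$; hence by the usual count of squarefree values of a linear polynomial the density of $d$ with $256d-27$ squarefree is $\prod_{p\text{ odd}}(1-p^{-2})=\tfrac{4}{3}\cdot\tfrac{6}{\pi^{2}}=8/\pi^{2}$. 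Counting in $[-N,N]$ one has $\#(A\cap B\cap[-N,N])\ge \#(A\cap[-N,N])+\#(B\cap[-N,N])-(2N+1)$ for any $A,B\subseteq\ZZ$, so the density of admissible $d$ is at least
\[
\frac{6}{\pi^{2}}+\frac{8}{\pi^{2}}-1=\frac{14}{\pi^{2}}-1>0.41849,
\]
the asserted bound. (Corollary~\ref{corf} is identical with $8/\pi^{2}$ replaced by $27/(4\pi^{2})$, the prime $3$ now playing the role of $2$.)

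The only substantive input is the monogeneity half, i.e.\ Theorems~\ref{gmon} and~\ref{g1}, whose proofs run the Montes algorithm one prime at a time; in the specialization $c=1$ the only Newton data come from $\overline{g_{1,d}}=x^{3}(x+1)$, so no delicate analysis remains inside the corollary. For the density the single point needing care is the behavior of $256d-27$ at the prime $2$ dividing its leading coefficient—this is why the correct constant is $8/\pi^{2}$, not $6/\pi^{2}$. I would also note that the union bound is lossy: a routine squarefree sieve gives the exact density $\prod_{p}(1-\rho(p^{2})/p^{2})$ with $\rho(4)=\rho(9)=1$ and $\rho(p^{2})=2$ for $p\ge 5$, roughly $0.553$; the stated $14/\pi^{2}-1$ is only an easily justified lower bound.
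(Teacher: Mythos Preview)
Your proof is correct and follows the same overall architecture as the paper: verify irreducibility, monogeneity, and the $S_4$ Galois group separately (exactly the content of Theorems~\ref{gmon} and~\ref{g1} specialized to $c=1$), then obtain the density lower bound via the union bound $\mathrm{dens}(A\cap B)\ge \mathrm{dens}(A)+\mathrm{dens}(B)-1$. Your sub-arguments are in places more elementary than the paper's---you rule out quadratic factors by direct coefficient elimination rather than through the depressed quartic and Theorem~\ref{LostArt}, you prove irreducibility of the resolvent cubic $y^3-4dy-d$ by the divisibility trick $\gcd(4s+1,s^3)=1$ rather than Eisenstein, and you compute the density of $d$ with $256d-27$ squarefree as $\prod_{p\text{ odd}}(1-p^{-2})=8/\pi^2$ directly (using that $256d-27\equiv 1\pmod 4$ so the prime $2$ is vacuous) rather than invoking Prachar's Theorem~\ref{Prachar} on squarefrees in the progression $27\pmod{256}$---but these lead to the same constants and the same bound $(14-\pi^2)/\pi^2$. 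Your observation that $d=-2$ is automatically excluded by the squarefreeness of $256d-27$ (since $-539=-7^2\cdot 11$) and your remark that the sieve density $\prod_p(1-\rho(p^2)/p^2)\approx 0.553$ matches the paper's numerical Remark~\ref{data} are both correct and worth keeping.
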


Heuristically, the best possible percentages in Corollaries \ref{corf} and \ref{corg} seem to be 55.3\%. See Remark \ref{data}. 

\section{Previous Work}

Before a more detailed exposition of our work, we list some results pertaining to Hasse's problem. It has been shown that almost all abelian extension of $\QQ$ with degree coprime to 6 are not monogenic; see Gras \cite{Gras}. Gassert \cite{Alden} shows that all fields obtained by adjoining a root of $x^n-a$, where $a$ is square-free and $a^p\not \equiv a$ modulo $p^2$ for all primes $p\mid n$, are monogenic. In \cite{JonesPhillips}, Jones and Phillips identify infinitely many monogenic fields coming from polynomials of the shape $x^n+a(m,n)x+b(m,n)$, where $a(m,n)$ and $b(m,n)$ are prescribed forms. They consider two families of forms, one yielding Galois group $S_n$ and the other $A_n$. Recently, Bhargava, Shankar, and Wang \cite{BSW} have shown that the density of monic, irreducible polynomials $f(x)\in\ZZ[x]$ such that a root, $\theta$, of $f(x)$ yields a power basis for the ring of integers of $\QQ(\theta)$ is $\frac{6}{\pi^2}=\zeta(2)\inv\approx 60.79\%$. In the same paper, they also show that the density of monic integer polynomials with square-free discriminants is 
$$\prod\limits_p \left(1-\dfrac{1}{p}+\dfrac{(p-1)^2}{p^2(p+1)}\right)\approx 35.82\%.$$
Note that these polynomials are a subset of the monic, irreducible polynomials $f(x)\in \ZZ[x]$ such that a root, $\theta$, yields a power basis for the ring of integers of $\QQ(\theta)$. 

Many of the approaches to Hasse's problem have focused on fields with a given Galois group. We summarize the state of the art for degree 4 number fields. For a nice treatise on approaches to monogeneity using index form equations, see Ga\'al's book \cite{GaalDiophantine}, ``Diophantine Equations and Power Integral Bases." In particular, Chapter 6 deals with the quartic case. A general algorithm for solving the quartic index form equations is presented, with the author expanding upon specific cases. 

A biquadratic field is an extension having $\ZZ/2\ZZ\times \ZZ/2\ZZ$ as the Galois group. A biquadratic extension can be written $\QQ\left(\sqrt{m},\sqrt{n}\right)$. Ga\'al, Peth\H{o}, and Pohst \cite{GaalPethoPohst} parametrize the field indices that can occur based on congruence conditions on $m$ and $n$. Gras and Tano\'e's article \cite{tangras} gives necessary and sufficient conditions for a biquadratic field to be monogenic. Jadrijevi\'{c} \cite{Jadrijevic} describes the minimal and field indices of the two families $\QQ\left(\sqrt{(c-2)c},\sqrt{(c+2)c}\right)$ and $\QQ\left(\sqrt{(c-2)c},\sqrt{(c+4)c}\right)$. This investigation is continued for the family $\QQ\left(\sqrt{(c-2)c},\sqrt{(c+4)c}\right)$ in \cite{Jadrijevic1}. When $c$ and $c+4$ are square-free, Ga\'al and Jadrijevi\'{c} \cite{GaalJadrijevic} show $\QQ\left(\sqrt{2c},\sqrt{2(c+4)}\right)$ is not monogenic, compute an integral basis, and determine the elements of minimal index. 

Dihedral quartic fields have received a significant amount of attention. In \cite{HSW}, Huard, Spearman, and Williams compute the discriminant and an integral basis of quartic fields with quadratic subfields. Further, they find infinitely many monogenic $D_8$ fields. Specifically, they show that for each square-free $c$ there are infinitely many fields of the form $\QQ\left(\sqrt{c},\sqrt{a+b\sqrt{c}}\right)$ that are monogenic. Ga\'al and Szab\'o \cite{GaalSzabo} solve index form equations to show that the power integral bases found in \cite{HSW} are the only possible power integral bases. In \cite{Kable}, Kable resolves the question of monogeneity when the $D_8$ field in question has an imaginary quadratic subfield and establishes some bounds in all cases. Using their algorithm from \cite{GPPI}, Ga\'al, Peth\H{o}, and Pohst \cite{GPPindexform} compute ``small" indices of totally real quartic fields with Galois group either $\ZZ/4\ZZ$ or $D_8$ and discriminant of absolute value less than $10^6$. The indices may not be minimal since the algorithm they implemented checks only for solutions to the index form equation with absolute value less than $10^6$.

A \emph{pure} quartic field is a field obtained by adjoining a root of a polynomial of the form $x^4-a$ to $\QQ$. In \cite{Funa}, Funakura gives necessary and sufficient conditions for pure monogenic quartic fields. Ga\'{a}l and Remete \cite{GaalRemete}, characterize the only power integral bases of a number of infinite families of pure quartic fields using binomial Thue equations and extensive calculations on a supercomputer. 

The \emph{simplest} quartic fields are given by a root of $x^4-tx^3-6x^2+tx+1$, where $t\neq \pm3,0$. They are totally real with Galois group $\ZZ/4\ZZ$. If $t^2+16$ is not divisible by an odd square, Olajos \cite{Olajos} has shown that the only two simplest quartics that are monogenic occur when $t=2$ and $t=4$. In \cite{GrasZbases}, Gras shows there are only two monogenic imaginary cyclic quartic fields. These are $\QQ(\zeta_5)$ and $\QQ\left(\zeta_{16}-\zeta_{16}\inv\right)$.

For $A_4$ fields, Spearman \cite{spear} shows $x^4+18x^2-4tx+t^2+81$ defines an infinite family of monogenic fields when $t\left(t^2+108\right)$ is square-free. 

With \cite{Gaalquartic}, Ga\'al considers five families of totally complex quartic polynomials. The polynomials are shown to be irreducible and the Galois groups are classified; $A_4$, $D_8$, $\ZZ/4\ZZ$, and $\ZZ/2\ZZ\times \ZZ/2\ZZ$ all occur. Further, Ga\'al computes all power integral bases of the orders generated by the roots. 

As for $S_4$ quartics, work by B\'erczes, Evertse, and Gy\H{o}ry \cite{BEG} restricts multiply monogenic orders. A recent paper \cite{GSS} by Gassert, Smith, and Stange shows $x^4-6x^2-tx-3$ with $t+8$ and $t-8$ square-free defines an infinite family of monogenic $S_4$ quartic fields. It is worth noting that the methods of \cite{GSS} are distinct from much of the other literature in that arithmetic properties of elliptic curves are central to proving monogeneity.

\section{Results}
In this paper we identify two families of monogenic quartic fields:

\begin{theorem}\label{fmon}
Let $a$ and $b$ be integers such that $\dfrac{256b^3-27a^4}{\gcd(256b^3,27a^4)}$ is square-free. Suppose that $f_{a,b}(x)=x^4+ax+b$ is irreducible and let $\theta$ be a root. Further, suppose every prime, $p$, dividing $\gcd(256b^3,27a^4)$ satisfies one of the following conditions:
\begin{enumerate}

\item $p$ divides $a$ and $b$, but $p^2$ does not divide $b$.

\item $p=2$, $p\dnd b$, and $(a,b)$ is congruent to one of the following pairs in $\ZZ/4\ZZ\times \ZZ/4\ZZ$:  $(0,1)$, $(2,3)$.

\item $p=3$, $p\dnd a$, and $(a,b)$ is congruent to one of the following pairs in $\ZZ/9\ZZ\times \ZZ/9\ZZ$:  $(1,3)$, $(1,6)$, $(2,0)$, $(2,3)$, $(4,0)$, $(4,6)$, $(5,0)$, $(5,6)$, $(7,0)$, $(7,3)$, $(8,3)$, $(8,6)$. 
 
\end{enumerate}
Then, $\QQ(\theta)$ is monogenic and $\theta$ is a generator of the ring of integers.
\end{theorem}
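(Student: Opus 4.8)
The plan is to apply the Montes (Newton polygon / Ore) machinery prime by prime to the polynomial $f_{a,b}(x)=x^4+ax+b$ and show that $\ZZ[\theta]=\Ocal_K$ by verifying that no prime $p$ divides the index $[\Ocal_K:\ZZ[\theta]]$. The starting point is the classical identity $\disc(f_{a,b})=-27a^4+256b^3$ (up to sign conventions), so that a prime $p$ can divide the index only if $p^2\mid \disc(f_{a,b})$, i.e.\ only if $p\mid \gcd(256b^3,27a^4)$ — since the ``stripped'' quotient $\dfrac{256b^3-27a^4}{\gcd(256b^3,27a^4)}$ is assumed square-free, every other prime dividing the discriminant does so to exactly the first power and is therefore harmless. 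Thus the whole problem reduces to the finitely many primes $p\mid\gcd(256b^3,27a^4)$, which (because $256=2^8$ and $27=3^3$) forces $p\in\{2,3\}$ together with the primes dividing $\gcd(a,b)$. This is exactly the list of primes addressed by conditions (1), (2), (3).

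For each such prime $p$ I would factor $f_{a,b}(x)$ over $\ZZ_p$ (equivalently, analyze $\overline{f_{a,b}}$ over $\FF_p$ and refine with Newton polygons). In case (1), where $p\mid a$, $p\mid b$, but $p^2\nmid b$: modulo $p$ the polynomial becomes $x^4$, so $\phi=x$ is the relevant lift; the $\phi$-Newton polygon has vertices governed by $v_p(b)=1$ and $v_p(a)\ge 1$, and the side of slope $1/4$ (from the point $(0,1)$ to $(4,0)$, since $v_p(b)=1$) is a single segment of length $4$ with denominator $4$ — by the theorem of the index / Ore's regularity criterion this side is regular (one-sided, totally ramified, residual polynomial of degree $1$), so $p$ does not divide the index. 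In cases (2) and (3) the congruence conditions on $(a,b)$ modulo $p^2$ (for $p=2$, modulo $4$) or modulo $p^2$/$p^3$ (for $p=3$, the conditions are given mod $9$) are precisely what is needed to guarantee that the relevant Newton polygons are one-sided with separable residual polynomials, or that $\overline{f_{a,b}}$ is already separable enough that Dedekind's criterion applies directly; in each listed residue class one checks $\overline{f_{a,b}} \bmod p$, forms the $\phi$-adic expansion for each irreducible factor $\phi$ of the reduction, draws the Newton polygon, and verifies $p$-regularity. Summing the contributions, $v_p\big([\Ocal_K:\ZZ[\theta]]\big)=0$ for every $p$, hence $\ZZ[\theta]=\Ocal_K$ and $K=\QQ(\theta)$ is monogenic.

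I expect the main obstacle to be the case $p=3$: the reduction $x^4+ax+b \bmod 3$ can factor in several ways depending on the residue of $(a,b)$, and the Newton polygon at $3$ may a priori have two sides or a side whose residual polynomial needs checking for separability, which is why the hypothesis is stated as an explicit list of twelve residue classes mod $9$ rather than a clean single congruence. Carefully going through these twelve classes — identifying in each the correct $\phi$, computing $v_3$ of the $\phi$-adic coefficients, and confirming the polygon is one-sided with a separable (degree-one or squarefree) residual polynomial so that Ore's conditions hold — is the technical heart. The case $p=2$ is analogous but shorter (only two residue classes mod $4$), and case (1) is essentially immediate from the slope-$1/4$ observation above. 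A secondary, more bookkeeping-type obstacle is making sure the sign/normalization of the discriminant is handled consistently and that the primes dividing $\gcd(a,b)$ but different from $2,3$ are genuinely covered by case (1) with no overlap or gap in the hypotheses.
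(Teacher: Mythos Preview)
Your proposal is correct and follows essentially the same route as the paper: reduce via the square-free hypothesis to primes $p\mid\gcd(256b^3,27a^4)$, then for each such $p$ compute the $\phi$-adic development of $f_{a,b}$ (with $\phi=x$ in case~(1), $\phi=x+1$ for $p=2$, and $\phi=x\pm1$ for $p=3$) and verify $\ind_\phi(f_{a,b})=0$ with a linear residual polynomial. The $p=3$ case is less laborious than you fear: since $3\mid b$ and $3\nmid a$ one has $f_{a,b}\equiv x(x\pm1)^3\pmod 3$ according as $a\equiv\pm1\pmod 3$, the separable factor $x$ contributes nothing, and the single requirement $v_3\!\bigl(f_{a,b}(\mp1)\bigr)=1$ on the constant term of the $(x\pm1)$-adic development recovers all twelve listed residue classes at once.
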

 
\begin{theorem}\label{gmon}
Let $c$ and $d$ be integers such that $d$ is square-free and $256d-27c^4$ is not divisible by the square of an odd prime. If $4\mid \left(256d-27c^4\right)$, we require that $(c,d)$ is congruent to either $(0,1)$ or $(2,3)$ in $\ZZ/4\ZZ\times \ZZ/4\ZZ$. Suppose that $g_{c,d}(x)=x^4+cx^3+d$ is irreducible and let $\tau$ be a root. Then, $\QQ(\tau)$ is monogenic and $\tau$ is a generator of the ring of integers.
\end{theorem}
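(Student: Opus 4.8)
The plan is to prove that $\ZZ[\tau]=\Ocal_K$, where $K=\QQ(\tau)$, by showing prime by prime that $\ZZ[\tau]$ is maximal. Since $g_{c,d}'(x)=x^{2}(4x+3c)$, a routine resultant computation gives $\disc(g_{c,d})=d^{2}(256d-27c^{4})$, and as $[\Ocal_K:\ZZ[\tau]]^{2}=\disc(g_{c,d})/\disc(K)$ it suffices to check that $\ZZ[\tau]$ is $p$-maximal for each prime $p$ dividing $d^{2}(256d-27c^{4})$. An odd prime $p$ with $p\mid 256d-27c^{4}$ but $p\nmid d$ is free: the hypothesis that $256d-27c^{4}$ carries no odd square factor forces $v_p(\disc g_{c,d})=1$, which is odd, so $v_p([\Ocal_K:\ZZ[\tau]])=0$ automatically. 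What remains are the primes dividing $d$ and, possibly, $p=2$, and for these I would run the first step of the Montes/Ore machinery, which here collapses to Dedekind's criterion (or, equivalently, one $\phi$-Newton polygon).

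For an odd prime $p\mid d$ we have $v_p(d)=1$ since $d$ is square-free, and reduction modulo $p$ gives $\overline{g_{c,d}}\equiv x^{3}(x+c)$ when $p\nmid c$ and $\overline{g_{c,d}}\equiv x^{4}$ when $p\mid c$. In both cases Dedekind's criterion reduces to the statement that $d/p$ is coprime to $p$, which is exactly our square-freeness assumption; equivalently, the $x$-Newton polygon of $g_{c,d}$ at $p$ is a single segment from $(0,1)$ to $(3,0)$ or $(4,0)$, one-sided with a degree-one (hence separable) residual polynomial and no interior lattice points, so $\ZZ[\tau]$ is $p$-maximal. The identical argument covers $p=2$ when $2\mid d$: in that situation $c$ must be odd --- the hypotheses forbid $c$ and $d$ both even, since $c$ even makes $4\mid 256d-27c^{4}$ and then the required congruence forces $d\equiv 1$ or $3\bmod 4$ --- so $\overline{g_{c,d}}\equiv x^{3}(x+1)\bmod 2$ and the same reduction applies.

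The only case that actually uses the congruence hypothesis is $p=2$ with $c$ even; then $d$ is odd, $\overline{g_{c,d}}\equiv (x+1)^{4}\bmod 2$, Dedekind's criterion fails, and one must translate. Substituting $x\mapsto x-1$ gives
\[
g_{c,d}(x-1)=x^{4}+(c-4)x^{3}+(6-3c)x^{2}+(3c-4)x+(1-c+d).
\]
Because $c$ is even the three middle coefficients are even, so this polynomial is Eisenstein at $2$ precisely when $v_2(1-c+d)=1$, i.e.\ when $1-c+d\equiv 2\bmod 4$; and with $c$ even that congruence holds exactly when $(c,d)\equiv(0,1)$ or $(2,3)$ in $\ZZ/4\ZZ\times\ZZ/4\ZZ$ --- which is the hypothesis of the theorem. (If instead $1-c+d\equiv 0\bmod 4$, the $2$-adic Newton polygon of $g_{c,d}(x-1)$ breaks into more than one segment and $2$ can divide the index, so the hypothesis cannot be dropped.) An Eisenstein polynomial generates the maximal order over $\ZZ_2$, and $\tau+1$ is one of its roots, so $\ZZ[\tau]=\ZZ[\tau+1]$ is $2$-maximal. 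Combining the three cases yields $\ZZ[\tau]=\Ocal_K$.

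The genuinely delicate point is this last one: recognizing $p=2$ with $c$ even as the sole place the naive criterion breaks, and extracting the precise modulo-$4$ congruence that repairs it. Everything else is bookkeeping with the discriminant and one-sided Newton polygons; in every case where the Montes machinery is actually invoked the relevant residual polynomials turn out to be linear, so the regularity hypothesis of Ore's theorem is automatic and the index contribution can simply be read off. I expect the final write-up to carry out all of this uniformly inside the Montes algorithm rather than splitting into Dedekind versus translated-Eisenstein cases as above.
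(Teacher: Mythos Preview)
Your proof is correct and follows essentially the same strategy as the paper: split primes into those dividing $256d-27c^4$ but not $d$ (handled by parity of $v_p(\Delta_g)$), primes dividing $d$ (handled by a one-sided Newton polygon at $\phi=x$, equivalently Dedekind's criterion with $d$ square-free), and the residual case $p=2$ with $c$ even and $d$ odd. The only cosmetic difference is in this last case: the paper takes the $(x-1)$-adic development and checks $v_2(c+d+1)=1$, while you translate by $x\mapsto x-1$ (effectively the $(x+1)$-adic development) and phrase the conclusion as ``Eisenstein at $2$,'' checking $v_2(1-c+d)=1$; since $x+1\equiv x-1\bmod 2$ both are valid lifts, and both conditions reduce to exactly the congruences $(c,d)\equiv(0,1)$ or $(2,3)\bmod 4$.
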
 

If we restrict the above families we can classify the Galois groups and analyze densities. Note the infinitude of the restricted families below shows the more general families described above are infinite.
 
\begin{theorem}\label{f1}
With the notation as in Theorem \ref{fmon}, consider $f_{b,b}(x)=x^4+bx+b$. Suppose the coefficients of $f_{b,b}(x)$ satisfy the conditions given in Theorem \ref{fmon}. If $b\neq 3,5$, then $\QQ(\theta)$ has Galois group $S_4$. Moreover, the density of polynomials satisfying Theorem \ref{fmon} among polynomials of the form $x^4+bx+b$ with $b\in \ZZ$ arbitrary is at least $\dfrac{51-4\pi^2}{4\pi^2}\approx 29.18\%$.
\end{theorem}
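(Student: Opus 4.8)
The statement splits into the Galois-group assertion and the density bound, and I would treat them in turn. For the Galois group I will invoke the classical resolvent-cubic criterion for quartics. Since $f_{b,b}(x)=x^4+bx+b$ is already depressed, its resolvent cubic is $r(y)=y^3-4by-b^2$, and $\disc f_{b,b}=\disc r$; an irreducible quartic over $\QQ$ has Galois group $S_4$ exactly when $r$ is irreducible over $\QQ$ and $\disc f_{b,b}\notin(\QQ^{\times})^2$. To show $r$ is irreducible I argue by contradiction: as $r$ is monic, a nontrivial factorization produces an integer root $m\mid b^2$ with $b^2+4mb-m^3=0$, and solving this as a quadratic in $b$ forces $m^2(m+4)$, hence $m+4$, to be a perfect square; writing $m=k^2-4$ one finds that $b$ lies in the two families $(k-2)^2(k+2)$ and $-(k-2)(k+2)^2$ for $k\ge 3$, together with finitely many small values. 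For $k\ge 4$ both families are divisible by the square of a prime, so $b$ is not square-free; and each remaining small value is either $0$ (whence $f_{b,b}$ is reducible, hence excluded) or fails the square-freeness hypotheses of Theorem~\ref{fmon}, the only candidates that could survive those hypotheses being $b=3$ and $b=5$ — and for these $256-27b$ equals $175$ and $121$, not square-free, while they are excluded by assumption in any case. Next, to see that $\disc f_{b,b}=b^3(256-27b)$ is not a square, note that since $b$ is square-free we have $\gcd(b,256-27b)=\gcd(b,256)\in\{1,2\}$; pulling a perfect square out of $b^3(256-27b)$ leaves a product of two coprime square-free integers (namely $b$ and $256-27b$, or $\tfrac b2$ and $\tfrac{256-27b}{2}$), and such a product is a square only if each factor is $\pm 1$, which never happens for an integer $b$. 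Hence $r$ is irreducible and $\disc f_{b,b}$ is not a square, so $\operatorname{Gal}(f_{b,b})\cong S_4$.

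For the density, the crucial reduction is that, with $a=b$, the hypotheses of Theorem~\ref{fmon} are \emph{equivalent} to the statement that $b$ and $256-27b$ are both square-free. Indeed, the primes dividing $\gcd(256b^3,27b^4)$ are exactly the primes dividing $b$, and one checks that conditions (1)--(3) of Theorem~\ref{fmon} all hold for $f_{b,b}$ precisely when each such prime divides $b$ to the first power, i.e.\ when $b$ is square-free; granting this, $\gcd(256b^3,27b^4)=2^{v_2(b)}b^3$, so $\dfrac{256b^3-27b^4}{\gcd(256b^3,27b^4)}=\dfrac{256-27b}{2^{v_2(b)}}$, which (since $v_2(256-27b)=v_2(b)\le 1$ for square-free $b$) is square-free if and only if $256-27b$ is; and $f_{b,b}$ is automatically irreducible for square-free $b$, since a rational linear or quadratic factor would force $b\in\{0,16\}$. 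Thus the $b$ for which Theorem~\ref{fmon} applies to $f_{b,b}$ are exactly those in $A\cap B$, where $A=\{b\in\ZZ: b\text{ square-free}\}$ and $B=\{b\in\ZZ: 256-27b\text{ square-free}\}$, and it remains to bound $\delta(A\cap B)$ from below, where $\delta(\cdot)$ denotes the (existing, elementary) density. By inclusion--exclusion,
\[
  \delta(A\cap B)\ \ge\ \delta(A)+\delta(B)-1 .
\]
Here $\delta(A)=\zeta(2)^{-1}=\tfrac{6}{\pi^2}$, while $\delta(B)$ is given by the elementary sieve for square-free values of the linear form $256-27x$: the local density of admissible residues is $1-\tfrac14$ at $p=2$ (because $4\mid 256-27b\Leftrightarrow 4\mid b$), is $1$ at $p=3$ (because $256-27b\equiv 4\pmod 9$), and is $1-p^{-2}$ at each prime $p\ge 5$, so that $\delta(B)=\tfrac34\prod_{p\ge 5}(1-p^{-2})=\tfrac{27}{4\pi^2}$. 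Combining, $\delta(A\cap B)\ge\tfrac{6}{\pi^2}+\tfrac{27}{4\pi^2}-1=\tfrac{51-4\pi^2}{4\pi^2}$, as claimed.

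The resolvent-cubic computation, the quartic Galois criterion, and the linear square-free sieve are all routine; the step requiring the most care is the middle one — verifying that ``the coefficients of $f_{b,b}$ satisfy the hypotheses of Theorem~\ref{fmon}'' is genuinely the same as ``$b$ and $256-27b$ are square-free,'' and that no exceptional value of $b$ rendering $r$ reducible survives those hypotheses. It is worth noting that it is precisely the crude inclusion--exclusion bound, rather than the exact density $\delta(A\cap B)=\tfrac23\prod_{p\ge 5}(1-2p^{-2})\approx 55.3\%$ (compare Remark~\ref{data}), that converts the answer into the clean closed form $\tfrac{51-4\pi^2}{4\pi^2}$.
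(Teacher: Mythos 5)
Your proposal is correct and follows essentially the same route as the paper: irreducibility of the resolvent cubic $y^3-4by-b^2$ (isolating $b=3,5$ as the only square-free exceptions) together with the non-square discriminant and the Kappe--Warren table gives $S_4$, and the density bound comes from the same inclusion--exclusion $\delta(A\cap B)\ge \tfrac{6}{\pi^2}+\tfrac{27}{4\pi^2}-1$. The only differences are cosmetic --- you parametrize the rational roots of the resolvent by solving a quadratic in $b$ where the paper uses a divisibility argument ($b\mid k$, $k=\pm b$), and you compute $\delta(B)=\tfrac{27}{4\pi^2}$ by a direct local sieve where the paper cites Prachar's theorem on square-free integers in arithmetic progressions --- plus the worthwhile extra observations that the hypotheses of Theorem~\ref{fmon} for $a=b$ are genuinely equivalent to $b$ and $256-27b$ being square-free, and that $b=3,5$ are already excluded by the square-freeness of $256-27b$.
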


\begin{theorem}\label{g1}
With the notation as in Theorem \ref{gmon}, consider $g_{1,d}(x)=x^4+x^3+d$. Suppose the coefficients of $g_{1,d}(x)$ satisfy the conditions given in Theorem \ref{gmon}. If $d\neq -2$, then $\QQ(\tau)$ has Galois group $S_4$. Moreover, the density of polynomials satisfying Theorem \ref{gmon} among polynomials of the form $x^4+x^3+d$ with $d\in \ZZ$ arbitrary is at least $\dfrac{14-\pi^2}{\pi^2}\approx 41.85\%$.
\end{theorem}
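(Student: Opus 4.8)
The statement has two independent parts---identifying the Galois group and estimating a density---which I would prove in turn.

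\emph{Galois group.} The plan is to invoke the classical resolvent-cubic criterion: an irreducible quartic over $\QQ$ has Galois group $S_4$ exactly when its resolvent cubic is irreducible over $\QQ$ and its discriminant is not a square in $\QQ$. The hypotheses imported from Theorem~\ref{gmon} include irreducibility of $g_{1,d}$; I would first record the sharper fact that, for square-free $d$, the polynomial $x^4+x^3+d$ is irreducible if and only if $d\neq-2$ (a rational root $r$ forces $d=-r^3(r+1)$, which for $|r|\ge 2$ is divisible by $r^2$ and hence not square-free, so the only square-free value admitting a linear factor is $r=1$, $d=-2$; and comparing coefficients shows any factorization into two monic quadratics over $\ZZ$ forces $d=0$), so that the hypothesis $d\neq-2$ is precisely an irreducibility hypothesis. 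A direct computation gives $\disc(g_{1,d})=d^2(256d-27)$, and since $256d-27\equiv 5\pmod 8$ this is never a square, which already pins the Galois group down to $C_4$, $D_4$, or $S_4$. Finally, the resolvent cubic of $x^4+x^3+d$ works out to $R(y)=y^3-4dy-d$; I would show $R$ has no rational root by noting such a root must be an integer $y_0\mid d$, writing $d=y_0 m$, and cancelling $y_0$ to get $y_0^2=m(4y_0+1)$, which forces $4y_0+1\mid 1$ since $\gcd(4y_0+1,y_0)=1$, hence $y_0=0$ and $d=0$, contradicting irreducibility. So $R$ is irreducible and the Galois group is $S_4$.

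\emph{Density.} The first step is to translate the hypotheses of Theorem~\ref{gmon} for $c=1$ into concrete conditions on $d$: since $256d-27$ is odd, ``not divisible by the square of an odd prime'' is the same as ``square-free''; since $256d-27\equiv 1\pmod 4$ the parenthetical mod-$4$ condition is vacuous; and, by the previous paragraph, irreducibility is the condition $d\neq-2$. Thus the $d\in\ZZ$ for which $g_{1,d}$ satisfies Theorem~\ref{gmon} are exactly those with $d$ square-free, $256d-27$ square-free, and $d\neq-2$; as $\{d=-2\}$ has density $0$, it suffices to lower-bound the density of $S=\{d\in\ZZ: d\ \text{and}\ 256d-27\ \text{are both square-free}\}$.

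For that I would use a one-step inclusion--exclusion bound. The density of square-free integers is $1/\zeta(2)=6/\pi^2$. The density of $d$ with $256d-27$ square-free is, by the elementary (and unconditional) square-free sieve for a linear form, $\prod_p\bigl(1-\rho(p^2)/p^2\bigr)$ with $\rho(p^2)=\#\{d\bmod p^2: p^2\mid 256d-27\}$; this count is $1$ for odd $p$ (as $256$ is a unit modulo $p^2$) and $0$ for $p=2$ (as $256d-27\equiv 1\pmod 4$), giving $\prod_{p\ \mathrm{odd}}(1-p^{-2})=\frac{4}{3}\cdot\frac{6}{\pi^2}=\frac{8}{\pi^2}$. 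Comparing counting functions on $|d|\le X$ gives $\#\{d\in S\}\ge\#\{d\ \text{square-free}\}-\#\{256d-27\ \text{not square-free}\}$, so the density of $S$ is at least $\frac{6}{\pi^2}-\bigl(1-\frac{8}{\pi^2}\bigr)=\frac{14-\pi^2}{\pi^2}\approx 41.85\%$, as claimed.

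I do not anticipate a real obstacle. The care points are the bookkeeping that reduces Theorem~\ref{gmon}'s hypotheses to the two square-free conditions---in particular the equivalence ``irreducible $\iff d\neq-2$'' and the vacuity of the mod-$4$ condition---and citing the linear square-free sieve in its unconditional form; the resolvent-cubic computation, its irreducibility, and the mod-$8$ remark on the discriminant are routine.
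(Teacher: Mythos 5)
Your proposal is correct and follows essentially the same route as the paper: irreducibility via a rational-root analysis plus ruling out quadratic factorizations, the irreducible resolvent cubic $y^3-4dy-d$ together with the non-square discriminant $d^2(256d-27)$ to conclude $S_4$, and a union bound combining the density $6/\pi^2$ of square-free $d$ with the density $8/\pi^2$ of $d$ making $256d-27$ square-free. The only cosmetic differences are that the paper excludes quadratic factors via the depressed quartic and the classical criterion of Theorem \ref{LostArt} rather than by comparing coefficients, proves the resolvent cubic irreducible by Eisenstein (for $d\neq\pm1$) rather than your divisibility argument, and obtains the $8/\pi^2$ by citing Prachar's theorem on square-free integers in the arithmetic progression $256d-27$ rather than writing out the sieve product $\prod_{p\ \mathrm{odd}}(1-p^{-2})$.
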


The primary reason for choosing the restricted families in the above theorems was so that we could easily analyze their densities. Within the larger class of polynomials which we prove yield monogenic fields, one can find other restrictions on the coefficients that yield families with a specific Galois group. However, in these cases studying density becomes more difficult, as one is concerned with square-free values of higher degree polynomials. Our methods could achieve similar results for polynomials of the form $x^4+ax^2+b$ or $x^4+c$. However, these families have already been well-studied.

The outline of our paper is as follows:  To prove Theorems \ref{fmon} and \ref{gmon} our main tool is the Montes algorithm, which we will briefly describe in Section \ref{Montessection}. In Section \ref{Galoisandirred}, we show that the restricted families are irreducible and have Galois group $S_4$. Section \ref{monogeneity} is concerned with applying the Montes algorithm to prove monogeneity. Lastly, in Section \ref{Density}, we analyze the densities of our restricted families. 

\subsection*{Acknowledgements}  The author would like to thank Katherine Stange and Alden Gassert for their help and encouragement. The author would also like to thank Sebastian Bozlee for the careful proofreading. 

\section{The Montes Algorithm}\label{Montessection}
We prove monogeneity with a simple application of the Montes algorithm. We follow \cite{EFMN} for our exposition of the algorithm. Those interested in more general situations are advised to consult \cite{gmn15}. For the purposes of our work, the goal of the Montes algorithm is to compute the $p$-adic valuation $v_p([\Ocal_K:\ZZ[\theta]])$. 

We begin by fixing notation. Let $f(x)\in\ZZ[x]$ be monic and irreducible, $\theta$ a root of $f(x)$, $K=\QQ(\theta)$, $\Ocal_K$ the ring of integers of $K$, and $p$ a prime in $\ZZ$. We extend the $p$-adic valuation on $\ZZ$ to $\ZZ[x]$ in the following manner. If $g(x)=b_0+b_1 x+\cdots +b_k x^k$, define $v_p(g(x))=\min\limits_{0\leq j\leq k}(v_p(b_j))$.

Now we describe a version of the Montes algorithm. Consider the reduction of $f(x)$ modulo $p$. Let $\overline{\phi}(x)$ be an irreducible factor of $f(x)$ modulo $p$ and let $\phi(x)$ be a lift of $\overline{\phi}(x)$ to $\ZZ[x]$. We may write
$$f(x)=a_0(x)+a_1(x)\phi(x)+\cdots +a_r(x)\phi(x)^r$$
where $a_i(x)\in\ZZ[x]$ has degree strictly less than $\degr(\phi(x))$. We call this the \emph{$\phi$-adic development} of $f$. To any coefficient, $a_i(x)$, of the $\phi$-adic development of $f$ we attach the point $(i, v_p(a_i(x)))$ in the plane. The lower convex envelope of these points is called the \emph{$\phi$-Newton polygon of $f$}. The polygon determined by the sides of the $\phi$-Newton polygon with negative slope is called the \emph{principal $\phi$-polygon of $f$}. We denote this polygon by $N$. The integer lattice points on or below $N$ contain the arithmetic information we are interested in. Specifically, the \emph{$\phi$-index of $f$} is $\degr(\phi)$ times the number of points in the plane with integral coordinates that lie on or below $N$, strictly above the $x$-axis, and strictly to the right of the $y$-axis. We denote this number, the number of points in the integer lattice satisfying the above conditions, by $\ind_\phi(f)$. 

\begin{example}
To illustrate how the $\phi$-Newton polygon is obtained, consider $f(x)=x^6+3x^5+x^4+15x^3+9x^2+18x+27$. We reduce modulo $3$ and obtain $x^4(x^2+1)$. Working with the irreducible factor $x$, we take the lift $x$ and the $x$-adic development is again our original polynomial $f(x)=x^6+3x^5+x^4+15x^3+9x^2+18x+27$. Now the $x$-Newton polygon is:
\begin{center}
\begin{tikzpicture}[scale=1]
    \draw[black, very thick] (4,0) -- (6,0);  
    \draw [<->,thick] (0,4)
        |- (7,0);
    \draw[black, thick] (0,3) -- (1,2);
    \draw[black, thick] (1,2) -- (4,0);
    \draw[black, thick] (4,0) -- (6,0);    
    \filldraw[black] (0,3) circle (2pt) node[anchor=east] {(0,3)};
    \filldraw[black] (1,2) circle (2pt) node[anchor=east] {(1,2)};
    \filldraw[black] (2,2) circle (2pt) node[anchor=west] {(2,2)};
    \filldraw[black] (3,1) circle (2pt) node[anchor=south] {(3,1)};
    \filldraw[black] (4,0) circle (2pt) node[anchor=north] {(4,0)};
    \filldraw[black] (5,1) circle (2pt) node[anchor=south] {(5,1)};
    \filldraw[black] (6,0) circle (2pt) node[anchor=north] {(6,0)};
    \filldraw[red] (2,1) circle (2pt) node[anchor=north] {(2,1)};
    \filldraw[red] (1,1) circle (2pt) node[anchor=north] {(1,1)};
    \node[] at (3.5,-1) {The $x$-Newton polygon for $f(x)$};
\end{tikzpicture}
\end{center}
The principal $x$-polygon merely excludes the side between $(4,0)$ and $(6,0)$. Further, accounting for $(1,1)$, $(2,1)$, and $(1,2)$, we see $\ind_x(f)=3$.
\end{example}

Continuing with our description of the Montes algorithm, to any integral $x$-coordinate $0\leq i\leq r$ of the principal $\phi$-polygon $N$, we attach the residual coefficient $c_i\in \FF_p[x]/\phi(x)$, defined to be
$$c_i=\left\{\begin{array}{lr}
        0, & \text{if } (i,v_p(a_i(x))) \text{ lies strictly above $N$}\\
        & \text{or $v_p(a_i(x))=\infty$.}\\
        &\\
        \dfrac{a_i(x)}{p^{v_p(a_i(x))}}\in \FF_p[x]/\phi(x), & \text{if $(i,v_p(a_i(x)))$ lies on $N$.}
        \end{array}\right\}$$
Note we have covered all cases since $(i,v_p(a_i(x)))$ cannot lie below $N$, as $N$ is the lower convex hull of the $(i,v_p(a_i(x)))$.

Let $S$ be one of the sides of $N$. Suppose $S$ has slope $\lambda=\dfrac{-h}{e}$ where $h,e$ are positive, coprime integers. Define the \emph{length} of $S$, denoted $l$, to be the length of the projection onto the $x$-axis. The \emph{ramification index} of $S$ is $e$, the denominator of $\lambda$. The \emph{degree} of $S$, denoted $d$, is $\dfrac{l}{e}$.
\begin{definition}
Let $t$ be the $x$-coordinate of the initial vertex of $S$. We define the \emph{residual polynomial} attached to $S$ to be 
$$R_\lambda(f)(y)=c_t+c_{t+e}y+\cdots+c_{t+(d-1)e}y^{d-1}+c_{t+de}y^d\in \FF_p[x]/\phi(x)[y].$$
\end{definition} 

Now we state the Theorem of the index, our key tool in proving monogeneity. This is Theorem 1.9 of \cite{EFMN}.
\begin{theorem}\label{Thmofindex}
Choose monic polynomials $\phi_1,\dots, \phi_k$ whose reduction modulo $p$ are the different irreducible factors of $f(x)$. Then, $$v_p([\Ocal_K:\ZZ[\theta]])\geq \ind_{\phi_1}(f)+\cdots + \ind_{\phi_k}(f).$$
Further, equality holds if and only if, for every $\phi_i$, each side of the principal $\phi_i$-polygon has a separable residual polynomial.
\end{theorem}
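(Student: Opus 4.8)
The plan is to reduce the statement to a prime-by-prime, then factor-by-factor analysis, so that the essential content is a statement about a single monic lift $\phi$ of an irreducible factor $\overline{\phi}$ of $f \bmod p$. Fix a prime $p$ and recall that $v_p([\Ocal_K:\ZZ[\theta]]) = \sum_{\gp \mid p} \big(e(\gp)f(\gp) \cdot (\text{something about the local index})\big)$; more precisely, by the conductor-discriminant style formula relating $\disc(f)$ to $\disc(K)$ we have $v_p([\Ocal_K:\ZZ[\theta]]) = \tfrac{1}{2}\big(v_p(\disc f) - v_p(\disc K)\big)$. So the first step is to pass to the $p$-adic completion: writing $f = \prod_i f_i$ as a product of irreducibles over $\QQ_p$, each $f_i$ corresponding to a prime $\gp_i$ above $p$, and noting that the factors $f_i$ group according to which $\overline{\phi}_j$ they reduce to modulo $p$ (by Hensel/Krasner, the $\QQ_p$-irreducible factors lifting a given $\overline{\phi}_j^{a_j}$ have total degree $a_j \deg\phi_j$). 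This localizes everything: it suffices to prove, for a single $\phi = \phi_j$, that the contribution to $v_p([\Ocal_K:\ZZ[\theta]])$ coming from the primes $\gp_i$ with $f_i \equiv \overline{\phi}^{\,\cdot} \bmod p$ is at least $\ind_\phi(f)$, with equality iff every side of the principal $\phi$-polygon has separable residual polynomial.

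The core of the argument is the Newton-polygon theory of Ore and Montes. The second step is the \emph{theorem of the polygon}: the sides of the principal $\phi$-Newton polygon $N$ of $f$ are in bijection with a decomposition $f = \prod_S f_S$ over $\QQ_p$ (up to units), where $f_S$ has $\phi$-degree equal to the length $l$ of the side $S$, and the Newton polygon of $f_S$ is a single segment of slope $\lambda_S = -h_S/e_S$. For each side $S$ with residual polynomial $R_{\lambda_S}(f)(y) \in (\FF_p[x]/\overline{\phi})[y]$, the further factorization of $f_S$ is governed by the factorization of $R_{\lambda_S}(f)$: each irreducible factor $\psi(y)$ of $R_{\lambda_S}(f)$, say of multiplicity $m_\psi$ and degree $\deg\psi$, contributes a chunk of $\phi$-degree $m_\psi e_S \deg\psi$. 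The third step is a local computation: for a side of slope $-h/e$ and degree $d$ whose residual polynomial factors as $\prod \psi^{m_\psi}$, one computes directly (using, e.g., a Newton-polygon / Eisenstein-type analysis of the corresponding factor $f_S$ over $\QQ_p$, or the known formula for the $p$-index contributed by a single side) that the index contribution is at least the number of lattice points strictly below the side, strictly above the $x$-axis and strictly right of the $y$-axis, with equality precisely when all $m_\psi = 1$, i.e. when $R_{\lambda_S}(f)$ is separable. Summing over all sides $S$ of $N$ and over all $\phi_j$, and observing that the lattice points below distinct segments of $N$ are disjoint and exhaust the region below $N$, yields both the inequality $v_p([\Ocal_K:\ZZ[\theta]]) \ge \sum_j \ind_{\phi_j}(f)$ and the separability characterization of equality. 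Finally one sums over all primes $p$.

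The main obstacle is the local index computation at a single side $S$, i.e. showing that a factor $f_S$ over $\QQ_p$ with one-sided Newton polygon of slope $-h/e$, length $l = de$, and residual polynomial $\prod_\psi \psi^{m_\psi}$ contributes exactly $\operatorname{ind}(S) + \sum_\psi (m_\psi - 1)(\text{positive correction})$ to the valuation of the index — and in particular that the "defect" vanishes iff every $m_\psi = 1$. This requires either invoking Ore's theorem (the one-segment, separable-residual case, where $f_S$ is "$\phi$-regular" and $\ZZ_p[\theta_S]$ has index exactly $\operatorname{ind}(S)$ over its integral closure) together with an inductive/recursive refinement when the residual polynomial is \emph{not} separable, or appealing directly to the higher-order Montes machinery of \cite{gmn15}. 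Since the paper explicitly cites this as Theorem 1.9 of \cite{EFMN}, in practice I would not reprove it from scratch; the honest plan is to quote it, and the sketch above is meant only to indicate the shape of the proof (localization, theorem of the polygon, per-side index count, summation) rather than to carry it out.
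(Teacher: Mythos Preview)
The paper does not prove this theorem; it merely states it and cites it as Theorem~1.9 of \cite{EFMN} (with \cite{gmn15} given for more general situations). You correctly recognize this at the end of your proposal, and your ultimate plan---to quote the result rather than reprove it---is exactly what the paper does. In that sense your proposal and the paper agree.

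Your sketch of the underlying argument (localize at $p$, split $f$ over $\QQ_p$ according to the mod-$p$ irreducible factors via Hensel, apply the theorem of the polygon to get a side-by-side decomposition, compute the index contribution of each side via the lattice-point count, and identify separability of the residual polynomial as the exact obstruction to equality) is a faithful outline of the Ore--Montes approach as developed in \cite{gmn15} and summarized in \cite{EFMN}. One small caution: your opening formula ``$v_p([\Ocal_K:\ZZ[\theta]]) = \sum_{\gp \mid p} e(\gp)f(\gp)\cdot(\text{something})$'' is not quite right as written and should be dropped in favor of the discriminant identity you state immediately after; but since this is only motivational and you do not actually use it, it does no harm to the sketch.
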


\begin{remark}\label{sep} The Montes algorithm is concerned with separability. With the notation as above, suppose $f(x)\equiv \gamma(x)\psi(x)$ modulo $p$ where $\gamma(x)$ is separable and $\gcd(\gamma(x),\psi(x))=1$. Then, $\gamma(x)$ contributes nothing to $v_p([\Ocal_K:\ZZ[\theta]])$. To see this, let $\eta(x)$ be an irreducible factor of $\gamma(x)$ and consider the $\eta(x)$-adic development of $f(x)$:
$$f(x)=a_0(x)+a_1(x)\eta(x)+\cdots +a_r(x)\eta(x)^r.$$
Because $f(x)$ has only one factor of $\eta(x)$ modulo $p$, we note $p\dnd a_1(x)$. Hence the principal $\eta$-polygon has only one side and that side terminates at $(1,0)$. Thus $\ind_\eta(f)=0$. Furthermore, the residual polynomial will be separable since linear polynomials are always separable. 

\end{remark}

\section{Galois Groups and Irreducibility}\label{Galoisandirred}
Consider the two families $f_{a,b}(x)=x^4+ax+b$ and $g_{c,d}(x)=x^4+cx^3+d$. These polynomials have discriminants $\Delta_f=b^3(256-27a^4b)$ and $\Delta_g=d^2(256d-27c^4)$. To prove monogeneity, we require the conditions outlined in Theorems \ref{fmon} and \ref{gmon}. However, to obtain families with Galois group $S_4$, we impose further restrictions. Namely, we require $a=b\neq 3,5$ for $f_{a,b}(x)$ and $c=1$, $d\neq -2$ for $g_{c,d}(x)$. There are less restrictive $S_4$ families, but we have chosen these parameters so that we can analyze the densities of these families.

In this section we are concerned with proving the first claims of Theorems \ref{f1} and \ref{g1}, which we restate.

\begin{theorem}\label{Gal}
The polynomials $f_{b,b}(x)=x^4+bx+b$ and $g_{1,d}(x)=x^4+x^3+d$ where $b$, $d$, $256-27b$, and $256d-27$  are square-free, $b\neq 3,5$, and $d\neq -2$ are irreducible and have Galois group $S_4$.
\end{theorem}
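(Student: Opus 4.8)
The plan is to split the proof into two independent parts: irreducibility, and the identification of the Galois group. For both polynomials the key structural fact I would exploit is that the square-freeness hypotheses on $256-27b$ (resp.\ $256d-27$), together with $b$ (resp.\ $d$) square-free, force the discriminants $\Delta_f=b^3(256-27b)$ and $\Delta_g=d^2(256d-27)$ to be ``as square-free as possible,'' which tightly constrains what the Galois group and factorization type can be.

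For irreducibility, I would first rule out linear factors: a root in $\QQ$ would have to be a rational integer dividing $b$ (resp.\ $d$), and one checks directly that $f_{b,b}(m)=m^4+bm+b$ and $g_{1,d}(m)=m^4+m^3+d$ cannot vanish for any integer $m$ under the stated hypotheses (the only candidates $m=0,\pm1$ and divisors of $b$ lead to contradictions with square-freeness or to the excluded values). The remaining danger is a factorization into two irreducible quadratics over $\QQ$; I would handle this by a direct comparison of coefficients, writing $x^4+bx+b=(x^2+rx+s)(x^2-rx+t)$ with $r,s,t\in\ZZ$, deriving $s+t=r^2$, $r(t-s)=b$, $st=b$, and showing the resulting system has no integer solutions given that $b$ is square-free (and similarly for $g_{1,d}$, where the $x^3$ term makes the bookkeeping slightly different but the same method applies). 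Alternatively, and perhaps more cleanly, I would note that a quartic factoring into two quadratics over $\QQ$ has a resolvent cubic with a rational root, so it suffices to show the resolvent cubic is irreducible over $\QQ$; this dovetails with the Galois-group argument below.

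For the Galois group, the standard approach for an irreducible quartic is to compute the resolvent cubic and use the classification: the Galois group is $S_4$ if and only if the resolvent cubic is irreducible over $\QQ$ and the discriminant is not a square; it is $A_4$ if the resolvent is irreducible and the discriminant is a square; and it lies in $\{D_4,\ZZ/4\ZZ,V_4\}$ if the resolvent cubic has a rational root. The discriminant is never a square here because $\Delta_f=b^3(256-27b)$ and $256-27b$ square-free together force $v_p(\Delta_f)$ odd for any prime $p\mid(256-27b)$ (and such a prime exists once $|256-27b|>1$, the finitely many exceptional $b$ being checked by hand, which is presumably where $b\ne 3,5$ and $d\ne-2$ enter), and likewise $v_p(\Delta_g)$ is odd for $p\mid(256d-27)$. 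So it remains to show the resolvent cubic is irreducible, equivalently has no rational (hence integer) root; I would write down the resolvent cubic explicitly for each family — for $x^4+px+q$ it is $y^3-4qy-p^2$, so for $f_{b,b}$ it is $y^3-4by-b^2$, and I would compute the analogous cubic for $g_{1,d}$ — and show that an integer root would divide $b^2$ (resp.\ the appropriate constant term) in a way incompatible with square-freeness of $b$ (resp.\ $d$) and $256-27b$ (resp.\ $256d-27$), again disposing of the finitely many small exceptional parameters directly.

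The main obstacle I anticipate is not conceptual but organizational: making the divisibility/square-freeness arguments for the resolvent cubic and for the quadratic factorization genuinely airtight, since one must carefully track which primes can divide which symmetric functions of the roots and use both square-freeness conditions simultaneously, and one must correctly enumerate and dispatch the finite list of small parameters (this is exactly what pins down the exclusions $b\ne 3,5$ and $d\ne -2$). A secondary subtlety is the interaction with the prime $2$ and $3$: since $\gcd(256,27)=1$, for $f_{b,b}$ the ``bad'' primes dividing $\gcd(256b^3,27b^4)=\gcd(256,27)\cdot\gcd(b^3,b^4)\cdot(\dots)$ are essentially the primes dividing $b$, and one should make sure the resolvent-cubic root-testing modulo small primes is consistent with the congruence conditions already imposed in Theorem~\ref{fmon}; but this is bookkeeping rather than a real difficulty.
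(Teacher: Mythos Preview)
Your approach is essentially the same as the paper's: both use the Kappe--Warren classification via the resolvent cubic and the discriminant, compute $R_{f_{b,b}}(y)=y^3-4by-b^2$ and $R_{g_{1,d}}(y)=y^3-4dy-d$ explicitly, and show each resolvent is irreducible while the discriminant is not a square. Two small points where the paper is tighter than your sketch: first, for irreducibility of $f_{b,b}$ with $b\neq\pm1$ and of $R_{g_{1,d}}$ with $d\neq\pm1$, the paper simply invokes Eisenstein at a prime dividing $b$ (resp.\ $d$), which is cleaner than coefficient-matching; second, your guess about where the exclusions enter is slightly off --- the conditions $b\neq 3,5$ arise precisely because $R_{f_{b,b}}$ acquires an integer root at those values (write a root as $bk_0$ and solve $bk_0^3-4k_0-1=0$), while $d\neq -2$ is needed because $g_{1,-2}(1)=0$, not because of the discriminant. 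Your discriminant-not-a-square argument also needs a small patch: the claim ``$v_p(\Delta_f)$ is odd for any $p\mid(256-27b)$'' fails at $p=2$ when $2\mid b$, but it suffices to observe that $256-27b$ always has an odd prime factor $q$, and then $q\nmid b$ forces $v_q(\Delta_f)=1$.
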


Before proving Theorem \ref{Gal}, we state two results we will need. We begin with some definitions. Given a quartic polynomial $h(x)=x^4+a_3x^3+a_2x^2+a_1x+a_0$ with roots $\alpha_1,\alpha_2,\alpha_3,\alpha_4$, we define the \emph{resolvent cubic} to be
$$R_h(y)=y^3-a_2y^2+(a_3a_1-4a_0)y-a_3^2a_0-a_1^2+4a_2a_0.$$

$R_h$ has roots $\alpha_1\alpha_2+\alpha_3\alpha_4$, $\alpha_1\alpha_3+\alpha_2\alpha_4$, and $\alpha_1\alpha_4+\alpha_2\alpha_3$. Given $h(x)$, a \emph{depressed quartic} is obtained by the substitution $x=X-\dfrac{a_3}{4}$ and has the form
$$h_{dep}(X)=X^4+\left(\dfrac{-3a_3^2}{8}+a_2\right)X^2+\left(\dfrac{a_3^3}{8}-\dfrac{a_3a_2}{2}+a_1\right)X+\left(-\dfrac{3a_3^4}{256}+\dfrac{a_3^2a_2}{16}-\dfrac{a_3a_1}{4}+a_0\right).$$
If we have a depressed quartic $h_{dep}(x)=x^4+b_2x^2+b_1x+b_0$, we define the \emph{resolvent cubic} to be 
$$R_{h,dep}(z)=z^3+2b_2z^2+\left(b_2^2-4b_0\right)z-b_1^2.$$
Though $R_h(y)$ and $R_{h,dep}(z)$ are both called the resolvent cubic, they are actually different polynomials even if $h(x)$ is depressed to begin with. More specifically, the substitution $y=z-\dfrac{a_3^2}{4}+a_2$ sends $R_h(y)$ to $R_{h,dep}(z)$. Thus $R_h$ has a root in $\QQ$ if and only if $R_{h,dep}$ has a root in $\QQ$.

Now we recall a classical theorem. One can see \cite{LostArt} for a clear, elementary exposition.
\begin{theorem}\label{LostArt}
With the notation as above, $h(x)$ factors into quadratic polynomials in $\QQ[x]$ if and only if at least one of the following hold:
\begin{enumerate}
\item\label{gal1} $R_{h,dep}$ has a nonzero root in $\QQ^2$. That is, $R_{h,dep}$ has a root that is the square of a nonzero rational number.
\item\label{gal2} $b_1=0$ and $b_2^2-4b_0\in \QQ^2$.
\end{enumerate}
\end{theorem}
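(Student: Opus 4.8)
The plan is to reduce to the depressed quartic and then factor by the method of undetermined coefficients, recognizing the resolvent cubic as the equation satisfied by the square of the linear coefficient of a quadratic factor. First I would observe that $h(x)$ factors into two quadratics in $\QQ[x]$ if and only if $h_{dep}(x) = x^4 + b_2 x^2 + b_1 x + b_0$ does: the two are related by the rational translation $x = X - a_3/4$, which carries any quadratic factor to a quadratic factor and is invertible over $\QQ$. Since the conditions (1) and (2) are phrased in the depressed coefficients, it then suffices to characterize when $h_{dep}$ splits into two rational quadratics, and because $h_{dep}$ is monic we may take both factors monic.

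Because $h_{dep}$ has no $x^3$ term, any such factorization must have the form $h_{dep}(x) = (x^2 + \alpha x + \beta)(x^2 - \alpha x + \delta)$ with $\alpha,\beta,\delta \in \QQ$. Expanding and matching the coefficients of $x^2$, $x^1$, and $x^0$ gives the system
\begin{align*}
\beta + \delta - \alpha^2 &= b_2, \\
\alpha(\delta - \beta) &= b_1, \\
\beta\delta &= b_0.
\end{align*}
I would then split on whether $\alpha = 0$. When $\alpha \neq 0$, the first two equations solve for $\beta,\delta = \tfrac{1}{2}\left(b_2 + \alpha^2 \mp b_1/\alpha\right)$, and substituting into $\beta\delta = b_0$, clearing the denominator, and setting $z = \alpha^2$ produces exactly $z^3 + 2b_2 z^2 + (b_2^2 - 4b_0)z - b_1^2 = 0$, i.e. $R_{h,dep}(z) = 0$. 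Thus a factorization with $\alpha \neq 0$ exists precisely when $R_{h,dep}$ has a root $z = \alpha^2$ that is a nonzero rational square, which is condition (1). When $\alpha = 0$, the middle equation forces $b_1 = 0$, and then $\beta,\delta$ are the roots of $t^2 - b_2 t + b_0$, which lie in $\QQ$ exactly when the discriminant $b_2^2 - 4b_0$ is a rational square; this is condition (2).

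Assembling the two cases yields both directions at once: any rational quadratic factorization has either $\alpha \neq 0$, forcing (1), or $\alpha = 0$, forcing (2); conversely, given (1) I take $\alpha = \sqrt{z} \in \QQ^\times$ and define $\beta,\delta$ by the displayed formulas, while given (2) I take $\beta,\delta$ to be the rational roots of $t^2 - b_2 t + b_0$, and in each case the explicit factors multiply back to $h_{dep}$. The one point deserving care is the backward direction of the $\alpha \neq 0$ case: one must check that the relation $\beta\delta = b_0$ is genuinely recovered from $R_{h,dep}(\alpha^2) = 0$, not merely consistent with it. This is immediate, however, because multiplying by $z = \alpha^2$ is an equivalence when $z \neq 0$, so the resolvent relation is equivalent to $\beta\delta = b_0$. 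There is no serious obstacle here; the whole argument is a finite, reversible computation, and its content is simply that the resolvent cubic records the admissible values of the square of the linear coefficient in a quadratic factor.
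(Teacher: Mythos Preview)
Your argument is correct and is the standard undetermined-coefficients derivation of this classical fact. The paper itself does not prove Theorem~\ref{LostArt} at all; it merely cites \cite{LostArt} for ``a clear, elementary exposition,'' so there is no proof in the paper to compare against. What you have written is essentially that elementary exposition: reduce to the depressed quartic by the rational translation, write a putative factorization as $(x^2+\alpha x+\beta)(x^2-\alpha x+\delta)$, match coefficients, and eliminate $\beta,\delta$ to obtain $R_{h,\mathrm{dep}}(\alpha^2)=0$. Your case split on $\alpha=0$ versus $\alpha\neq 0$ cleanly separates conditions~\eqref{gal1} and~\eqref{gal2}, and your remark that multiplying by $z=\alpha^2$ is reversible for $z\neq 0$ correctly handles the only subtlety in the converse direction.
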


We will also use the following result of Kappe and Warren \cite[Theorem 1]{GalQuar} to determine the Galois groups. One can also consult \cite{KCBlurb} for a nice exposition with ample examples.

\begin{theorem}\label{GalQuar} Let $h(x)$ be a quartic polynomial that is irreducible over $\QQ$ and let $\Delta_h$ be the discriminant. Further, let $G_h$ be the Galois group of $h$. Then, with the notation as above, the first two columns of the following table imply the third column.
\begin{center}
\begin{tabular}{ |c|c|c| } 
 \hline
 $\Delta_h$ & $R_h$ & $G_h$\\
 \hline
 not a square & irreducible & $S_4$ \\ 
 a square & irreducible & $A_4$ \\ 
 not a square & reducible & $D_8$ or $\ZZ/4\ZZ$ \\
 a square & reducible & $\ZZ/2\ZZ \times \ZZ/2\ZZ$ \\ 
 \hline
\end{tabular}
\end{center} 
\end{theorem}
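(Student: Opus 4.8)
The plan is to use the fact that, since $h(x)$ is irreducible over $\QQ$, its Galois group $G_h$ is a \emph{transitive} subgroup of $S_4$, where we identify $S_4$ with the permutations of the four roots $\alpha_1,\alpha_2,\alpha_3,\alpha_4$. Up to conjugacy, the transitive subgroups of $S_4$ are exactly the five groups appearing in the table: $S_4$, $A_4$, $D_8$, $\ZZ/4\ZZ$, and the normal Klein four-group $V_4 := \{e,(12)(34),(13)(24),(14)(23)\}\cong\ZZ/2\ZZ\times\ZZ/2\ZZ$. The strategy is then to isolate $G_h$ among these five using two Galois-theoretic invariants, namely whether $\Delta_h$ is a square and whether $R_h$ is reducible, and to show that each of the four combinations of answers pins down exactly one row of the table.

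First I would handle the discriminant column. Writing $\delta = \prod_{i<j}(\alpha_i-\alpha_j)$, so that $\delta^2 = \Delta_h$ and $\delta \neq 0$ by separability (an irreducible polynomial in characteristic $0$ has distinct roots), the relation $\sigma(\delta) = \mathrm{sgn}(\sigma)\,\delta$ shows that $\delta$ is fixed by $G_h$ precisely when $G_h \subseteq A_4$. Since $\delta$ lies in the splitting field, $\delta$ is fixed by $G_h$ if and only if $\delta \in \QQ$, that is, if and only if $\Delta_h$ is a square in $\QQ$. Among the five transitive subgroups, the ones contained in $A_4$ are exactly $A_4$ and $V_4$, since every element of $V_4$ is a product of two transpositions and hence even, while $\ZZ/4\ZZ$ and $D_8$ contain $4$-cycles, which are odd. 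Thus $\Delta_h$ is a square $\iff G_h \in \{A_4, V_4\}$.

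Next I would analyze the resolvent cubic. Its roots are $\beta_1 = \alpha_1\alpha_2+\alpha_3\alpha_4$, $\beta_2=\alpha_1\alpha_3+\alpha_2\alpha_4$, and $\beta_3=\alpha_1\alpha_4+\alpha_2\alpha_3$, and a short computation shows these are distinct: an equality $\beta_i=\beta_j$ would force a relation of the form $(\alpha_k-\alpha_l)(\alpha_m-\alpha_n)=0$, which is impossible by separability. The key structural fact is that $S_4$ permutes $\{\beta_1,\beta_2,\beta_3\}$ through the natural surjection $S_4 \twoheadrightarrow S_3$ whose kernel is $V_4$: each $\beta_i$ is the invariant attached to one of the three partitions of $\{1,2,3,4\}$ into two pairs, the stabilizer of a given $\beta_i$ is the copy of $D_8$ preserving that pair-partition, and $V_4$ is exactly the subgroup fixing all three partitions. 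Consequently $R_h$ is irreducible over $\QQ$ if and only if $G_h$ acts transitively on $\{\beta_1,\beta_2,\beta_3\}$, i.e. the image of $G_h$ in $S_3$ has order divisible by $3$. Computing $G_h/(G_h\cap V_4)$ for each group yields image $S_3$ for $S_4$, image $\ZZ/3\ZZ$ for $A_4$, and an image of order at most $2$ for each of $D_8$, $\ZZ/4\ZZ$, and $V_4$; hence $R_h$ is irreducible $\iff G_h \in \{S_4, A_4\}$.

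Finally I would intersect the two dichotomies to read off the table: $\Delta_h$ not a square with $R_h$ irreducible forces $G_h \in \{S_4,D_8,\ZZ/4\ZZ\}\cap\{S_4,A_4\}=\{S_4\}$; $\Delta_h$ a square with $R_h$ irreducible forces $G_h=A_4$; $\Delta_h$ not a square with $R_h$ reducible forces $G_h\in\{D_8,\ZZ/4\ZZ\}$; and $\Delta_h$ a square with $R_h$ reducible forces $G_h=\ZZ/2\ZZ\times\ZZ/2\ZZ$. I expect the main work to be the third step, namely verifying that the $G_h$-action on the roots of $R_h$ really is the map $S_4\to S_3$ with kernel $V_4$; this requires correctly identifying the stabilizer of each $\beta_i$ as the associated $D_8$ and then checking the intersections $G_h\cap V_4$ case by case. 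The remaining ingredients, namely separability, the sign character, and the classification of transitive subgroups of $S_4$, are standard.
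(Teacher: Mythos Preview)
Your argument is correct and is essentially the classical proof of this result. Note, however, that the paper does not actually prove Theorem~\ref{GalQuar}: it is quoted from Kappe and Warren \cite[Theorem~1]{GalQuar} (with a pointer to \cite{KCBlurb} for further exposition) and used as a black box. What you have written is precisely the standard proof found in those references: reduce to the five transitive subgroups of $S_4$, use the sign character on $\delta=\prod_{i<j}(\alpha_i-\alpha_j)$ to separate $\{A_4,V_4\}$ from $\{S_4,D_8,\ZZ/4\ZZ\}$, and use the $S_4\twoheadrightarrow S_3$ action on the three pair-partitions (equivalently, on the roots $\beta_i$ of $R_h$) to separate $\{S_4,A_4\}$ from $\{D_8,\ZZ/4\ZZ,V_4\}$. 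The only point worth a small remark is your passage from ``$R_h$ irreducible'' to ``$G_h$ acts transitively on $\{\beta_1,\beta_2,\beta_3\}$'': strictly speaking, reducibility of a cubic over $\QQ$ is equivalent to having a rational root, hence to $G_h$ \emph{fixing} some $\beta_i$; that this is in turn equivalent to non-transitivity uses the easy fact that every subgroup of $S_3$ with no fixed point already has order divisible by $3$. You implicitly use this when you phrase the criterion as ``image has order divisible by $3$,'' so the logic is fine, but it is worth making that small step explicit.
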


We proceed with the proof of Theorem \ref{Gal}.
\begin{proof} 
We begin with the irreducibility $f_{b,b}(x)=x^4+bx+b$. If $b$ is not $\pm 1$, then $f_{b,b}$ is Eisenstein at any prime dividing $b$. If $b=\pm 1$, then the rational root test shows that there is not a root in $\QQ$. To show $f_{\pm 1,\pm 1}$ does not split into quadratic factors we consider $R_{f_{\pm1,\pm1},dep}(z)=z^3\mp 4z-1$. The rational root test shows $R_{f_{\pm1,\pm1},dep}$ does not have a root in $\QQ$, let alone $\QQ^2$. Since $b_2=\pm1$, Theorem \ref{LostArt} shows $f_{\pm1,\pm1}$ is irreducible. Note that since $R_{f_{\pm1,\pm1},dep}$ is irreducible, $R_{f_{\pm1,\pm1}}$ is irreducible.

It remains to consider $R_{f_{b,b}}=y^3-4by-b^2$ for $b\neq \pm 1$. Suppose we have a root $k$. The rational root test shows $k\in \ZZ$ divides $b^2$. If $p$ is a prime divisor of $b$, then reduction modulo $p$ shows $p$ divides $k$. Since $b$ is square-free, $b$ divides $k$. Write $k =bk_0$. We have $b^2(bk_0^3-4k_0-1)=0$. Thus, $bk_0^3-4k_0-1=0$. If any prime divides $k_0$, reduction modulo that prime yields a contradiction. Hence, $k_0=\pm 1$. If $k_0=1$, then $b=5$, and if $k_0=-1$, then $b=3$. Therefore, if $b\neq 3,5$, then $R_{f_{b,b}}$ is irreducible.  

For $g_{1,d}(x)=x^4+x^3+d$, suppose we have a root $l$. By Gauss's lemma, $l\in \ZZ$. We have $l^3(l+1)=-d$. Since $d$ is square-free, we must have $l=1$ and $d=-2$. Thus for $d\neq -2$, we conclude $g_{1,d}$ does not have a root in $\QQ$. To see that $g_{1,d}$ does not factor into quadratics, we make the change of variables $x=X-\dfrac{1}{4}$ to obtain the depressed quartic 
$$X^4-\dfrac{3}{8}X^2+\dfrac{1}{8}X-\dfrac{3}{256}+d.$$
Here $b_1=\dfrac{1}{8}$ so condition \eqref{gal2} of Theorem \ref{LostArt} does not hold. Consider $R_{g_{1,d}}(y)=y^3-4dy-d$. If $d\neq \pm 1$, then $R_{g_{1,d}}$ is Eisenstein at any prime dividing $d$ and hence irreducible. If $d=\pm 1$, the rational root test shows $R_{g_{1,d}}$ is irreducible. Thus condition \eqref{gal1} of Theorem \ref{LostArt} does not hold since $R_{g_{1,d}}$ has a root in $\QQ$ if and only if $R_{g_{1,d},dep}$ has a root in $\QQ$. We conclude $g_{1,d}$ is irreducible.

We have demonstrated that, with the conditions given in Theorem \ref{Gal}, $f_{b,b},g_{1,d},R_{f_{b,b}},$ and $R_{g_{1,d}}$ are all irreducible. A quick computation shows that $\Delta_{f_{b,b}}$ and $\Delta_{g_{1,d}}$ are not squares. Thus, Theorem \ref{GalQuar} shows that $f_{b,b}$ and $g_{1,d}$ have Galois group $S_4$.  

\end{proof}

\begin{remark}\label{monocubic} Let $\beta$ be a root of $R_{g_{1,d}}$. Note that $g_{1,d}$ and $R_{g_{1,d}}$ both have discriminant $d^2(256d-27)$. The methods of Section \ref{monogeneity} show $\beta$ generates a power basis for the ring of integers of the cubic field $\QQ(\beta)$ exactly when $\tau$, a root of $g_{1,d}$, generates a power basis for the ring of integers of the quartic field $\QQ(\tau)$. 
\end{remark}

\section{monogeneity}\label{monogeneity}

For the following we recall a classical formula from algebraic number theory. Let $K$ be a number field obtained by adjoining a root, $\alpha$, of some monic irreducible polynomial $h(x)\in \ZZ[x]$. Write $\mathcal{O}_K$ for the ring of integers, $\disc(K)$ for the discriminant of $K$, and $\Delta_h$ for the discriminant of $h(x)$. Let $p$ be a prime. We have
\begin{equation*}\label{classic}
v_p(\disc(K))+2v_p([\mathcal{O}_K:\ZZ[\alpha]])=v_p(\Delta_h).
\end{equation*}
Note this implies any prime dividing $[\mathcal{O}_K:\ZZ[\alpha]]$ also divides $\Delta_h$. Before we proceed with the proof, we recall Theorem \ref{fmon}:
  
\begin{theorem}\label{A}
Let $a$ and $b$ be integers such that $\dfrac{256b^3-27a^4}{\gcd(256b^3,27a^4)}$ is square-free. Suppose that $f_{a,b}(x)=x^4+ax+b$ is irreducible and let $\theta$ be a root. Further, suppose every prime, $p$, dividing $\gcd(256b^3,27a^4)$ satisfies one of the following conditions:
\begin{enumerate}

\item\label{1} $p$ divides $a$ and $b$, but $p^2$ does not divide $b$.

\item\label{2} $p=2$, $p\dnd b$, and $(a,b)$ is congruent to one of the following pairs in $\ZZ/4\ZZ\times \ZZ/4\ZZ$:  $(0,1)$, $(2,3)$.

\item\label{3} $p=3$, $p\dnd a$, and $(a,b)$ is congruent to one of the following pairs in $\ZZ/9\ZZ\times \ZZ/9\ZZ$:  $(1,3)$, $(1,6)$, $(2,0)$, $(2,3)$, $(4,0)$, $(4,6)$, $(5,0)$, $(5,6)$, $(7,0)$, $(7,3)$, $(8,3)$, $(8,6)$. 
 
\end{enumerate}
Then, $\QQ(\theta)$ is monogenic and $\theta$ is a generator of the ring of integers.
\end{theorem}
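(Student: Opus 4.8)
The plan is to apply the Montes machinery, in the form of the Theorem of the index (Theorem~\ref{Thmofindex}), to show that $v_p\bigl([\Ocal_K:\ZZ[\theta]]\bigr)=0$ for every rational prime $p$; since this says exactly $\ZZ[\theta]=\Ocal_K$, that finishes the proof. Throughout I use the classical relation $v_p(\disc K)+2\,v_p\bigl([\Ocal_K:\ZZ[\theta]]\bigr)=v_p(\Delta_f)$ together with the fact that $\Delta_f=256b^3-27a^4$ (up to sign), so only primes dividing $\Delta_f$ can divide the index. Most of those need no real work: since $\gcd(256b^3,27a^4)$ divides the difference $256b^3-27a^4=\Delta_f$, any prime $p$ dividing $\Delta_f$ but not $\gcd(256b^3,27a^4)$ divides the square-free integer $\Delta_f/\gcd(256b^3,27a^4)$, so $v_p(\Delta_f)=1$ and the relation above forces $v_p\bigl([\Ocal_K:\ZZ[\theta]]\bigr)=0$ with no appeal to Montes. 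It remains to treat the primes $p\mid\gcd(256b^3,27a^4)$; comparing the $2$-adic and $3$-adic valuations of $256b^3$ and $27a^4$ shows such a $p$ equals $2$ only if $2\mid a$, equals $3$ only if $3\mid b$, and otherwise divides both $a$ and $b$ --- so the hypotheses put $p$ in at least one of cases (1), (2), (3), and I run the algorithm for each.

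For each such $p$ I pick $\phi$ from the factorization of $f$ modulo $p$ and read off the principal $\phi$-Newton polygon. In case (1), $f\equiv x^4\pmod p$; with $\phi=x$ the $\phi$-adic development is $f$ itself, with constant term $b$ of valuation $v_p(b)=1$ (as $p\mid b$ but $p^2\dnd b$). In case (2), $p=2$ with $a$ even and $b$ odd, so $f\equiv(x+1)^4\pmod 2$; with $\phi=x+1$, the substitution $x=u-1$ gives the $\phi$-adic development $u^4-4u^3+6u^2+(a-4)u+(1-a+b)$. In case (3), $p=3$ with $3\mid b$ and $3\dnd a$, so $f\equiv x\cdot\psi(x)^3\pmod 3$, where $\psi(x)=x+1$ if $a\equiv1$ and $\psi(x)=x-1$ if $a\equiv2$ modulo $3$ (using $x^3\pm1=(x\pm1)^3$ in $\FF_3[x]$); the simple factor $x$ contributes nothing to the index by Remark~\ref{sep}, and for $\phi=\psi$ the substitution $x=u\mp1$ gives a $\phi$-adic development with constant term $1-a+b$ or $1+a+b$ accordingly. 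In all three cases the constant term $a_0$ of the $\phi$-adic development satisfies $v_p(a_0)=1$ --- automatic in case (1), and the substance of the congruence conditions in cases (2) and (3) --- while the remaining low-order coefficients either vanish or have positive $p$-adic valuation and the coefficient whose index is the multiplicity of $\bar\phi$ in $f\bmod p$ (namely $4$ in cases (1),(2) and $3$ in case (3)) is a unit. Hence the principal $\phi$-polygon is the single segment from $(0,1)$ to $(4,0)$ of slope $-\tfrac14$ in cases (1),(2), and from $(0,1)$ to $(3,0)$ of slope $-\tfrac13$ in case (3). Consequently $\ind_\phi(f)=0$, because no lattice point lies strictly below that segment, strictly above the $x$-axis, and strictly to the right of the $y$-axis; and the residual polynomial attached to its one side has degree equal to its length divided by its ramification index, namely $1$, so it is linear, hence separable, with unit leading and constant coefficients. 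Theorem~\ref{Thmofindex} then gives equality, so $v_p\bigl([\Ocal_K:\ZZ[\theta]]\bigr)=\ind_\phi(f)$, plus $\ind_x(f)=0$ in case (3), which is $0$. Since $p$ was arbitrary, $\ZZ[\theta]=\Ocal_K$.

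The one genuinely delicate step, and the only place the explicit congruence lists enter, is the verification in cases (2) and (3) that the stated conditions really do force $v_p(a_0)=1$, equivalently $a_0\not\equiv0\pmod{p^2}$: one must check that $v_2(1-a+b)=1$ for $(a,b)$ congruent to $(0,1)$ or $(2,3)$ modulo $4$, and that $v_3(1-a+b)=1$, respectively $v_3(1+a+b)=1$, for each of the six admissible pairs modulo $9$ with $a\equiv1$, respectively $a\equiv2$, modulo $3$. These are precisely the residue classes in which $a_0$ survives modulo $p^2$; for the other classes the $\phi$-Newton polygon can acquire a second side, whose residual polynomial need not be separable, and the clean conclusion fails. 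I expect this congruence bookkeeping --- along with the preliminary verification that cases (1)--(3) exhaust the primes requiring the Montes algorithm --- to be the main obstacle, though it is entirely routine.
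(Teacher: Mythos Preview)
Your proposal is correct and follows essentially the same route as the paper: reduce to primes dividing $\gcd(256b^3,27a^4)$ via the discriminant relation, then in each of the three cases compute the $\phi$-adic development, observe that the constant term has $p$-adic valuation exactly~$1$ (this being the content of the congruence lists), so the principal $\phi$-polygon is a single segment of slope $-1/4$ or $-1/3$ with $\ind_\phi(f)=0$ and linear residual polynomial, and conclude by the Theorem of the index. Your write-up is in fact slightly more explicit than the paper's about the shape of the Newton polygon in cases~(2) and~(3), and about why the three cases exhaust the relevant primes, but the argument is the same.
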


\begin{proof}
Recall $\Delta_f=256b^3-27a^4$. Let $p$ be a prime dividing $\Delta_f$. We will show that $v_p([\mathcal{O}_{\QQ(\theta)}:\ZZ[\theta]])=0$. First, suppose $p\mid \Delta_f$, but $p\dnd \gcd(256b^3,27a^4)$. Since $\dfrac{256b^3-27a^4}{\gcd(256b^3,27a^4)}$ is square-free, we see 
$$1=v_p(\Delta_f)=v_p(\disc(\QQ(\theta)))+2v_p([\mathcal{O}_{\QQ(\theta)}:\ZZ[\theta]]).$$
Thus $v_p([\mathcal{O}_{\QQ(\theta)}:\ZZ[\theta]])=0$. 

So we consider primes $p$ dividing $\gcd(256b^3,27a^4)$. Suppose $p$ satisfies condition \eqref{1}. We apply the Montes algorithm. Considering $f_{a,b}(x)$ modulo $p$ we obtain $x^4$. Thus the only irreducible factor we must consider is $x$. Taking the lift $\phi(x)=x$, the principal $x$-polygon of $f_{a,b}(x)$ has one side, originating at $(0,1)$ and terminating at $(4,0)$. Thus $\ind_x(f_{a,b})=0$. The residual polynomial attached to this side is $y-\frac{b}{p}$, which is clearly separable. By Theorem \ref{Thmofindex}, $v_p([\mathcal{O}_{\QQ(\theta)}:\ZZ[\theta]])=0$. 

Now suppose $p=2$ satisfies condition \eqref{2}. We apply the Montes algorithm. Note that $2$ necessarily divides $a$, so modulo 2 we have
$$f_{a,b}(x)\equiv x^4+b\equiv (x+1)^4.$$
The $(x+1)$-adic development of $f_{a,b}(x)$ is 
$$f_{a,b}(x)=(x+1)^4-4(x+1)^3+6(x+1)^2+(a-4)(x+1)+b-a+1.$$
To show monogeneity, we need $\ind_{x+1}(f_{a,b})=0$. Thus we want $v_2(b-a+1)=1$. One checks this is equivalent to the criteria given in condition \eqref{2}. The residual polynomial is linear and hence separable. Thus, Theorem \ref{Thmofindex} tells us $v_2([\mathcal{O}_{\QQ(\theta)}:\ZZ[\theta]])=0$.

Finally, suppose $p=3$ satisfies condition \eqref{3}. We begin applying the Montes algorithm. Note that $3$ necessarily divides $b$, so that modulo 3 we have
$$f_{a,b}(x)\equiv x^4+ax\equiv x(x^3+a).$$
From Remark \ref{sep}, the separable factor $x$ contributes nothing to the index.

For the factor $(x^3+a)$, we have two cases:

{\bf Case 1:} Suppose $a\equiv 1$ modulo 3. Thus $f_{a,b}(x)\equiv x(x+1)^3$ modulo 3, we take the $(x+1)$-adic development
$$f_{a,b}(x)=(x+1)^4-4(x+1)^3+6(x+1)^2+(a-4)(x+1)+b-a+1.$$
In order to have $\ind_{x+1}(f_{a,b})=0$, we need $v_3(b-a+1)=1$. This is satisfied by the following pairs $(a,b)$ in $\ZZ/9\ZZ\times \ZZ/9\ZZ$:  $(1,3)$, $(1,6)$, $(4,0)$, $(4,6)$, $(7,0)$, $(7,3)$. The residual polynomial is linear and hence separable. Applying Theorem \ref{Thmofindex}, we conclude $v_3([\mathcal{O}_{\QQ(\theta)}:\ZZ[\theta]])=0$.

{\bf Case 2:} Suppose $a\equiv -1$ modulo 3. Thus $f_{a,b}\equiv x(x-1)^3$ modulo 3, we take the $(x-1)$-adic development
$$f_{a,b}(x)=(x-1)^4+4(x-1)^3+6(x-1)^2+(a+4)(x-1)+b+a+1.$$
In order to have $\ind_{x-1}(f_{a,b})=0$, we need $v_3(b+a+1)=1$. This is satisfied by the following pairs $(a,b)$ in $\ZZ/9\ZZ\times \ZZ/9\ZZ$:  $(2,0)$, $(2,3)$, $(5,0)$, $(5,6)$, $(8,3)$, $(8,6)$. The residual polynomials is linear and hence separable. Applying Theorem \ref{Thmofindex}, we conclude $v_3([\mathcal{O}_{\QQ(\theta)}:\ZZ[\theta]])=0$.

Since we have covered all primes dividing the discriminant of $f_{a,b}$, we see $[\mathcal{O}_{\QQ(\theta)}:\ZZ[\theta]]=1$. We conclude that $\QQ(\theta)$ is monogenic and $\theta$ generates the ring of integers. 
\end{proof}

Before proving Theorem \ref{gmon}, we remind ourselves of the statement:

\begin{theorem}\label{B}
Let $c$ and $d$ be integers such that $d$ is square-free and $256d-27c^4$ is not divisible by the square of an odd prime. If $4\mid \left(256d-27c^4\right)$, we require that $(c,d)$ is congruent to either $(0,1)$ or $(2,3)$ in $\ZZ/4\ZZ\times \ZZ/4\ZZ$. Suppose that $g_{c,d}(x)=x^4+cx^3+d$ is irreducible and let $\tau$ be a root. Then, $\QQ(\tau)$ is monogenic and $\tau$ is a generator of the ring of integers.
\end{theorem}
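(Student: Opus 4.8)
The plan is to follow the proof of Theorem~\ref{A} almost verbatim, combining the relation $v_p(\disc(\QQ(\tau)))+2v_p([\mathcal{O}_{\QQ(\tau)}:\ZZ[\tau]])=v_p(\Delta_g)$, where $\Delta_g=d^2(256d-27c^4)$, with the Theorem of the index (Theorem~\ref{Thmofindex}). The first observation is that only primes $p$ with $v_p(\Delta_g)\geq 2$ can divide $[\mathcal{O}_{\QQ(\tau)}:\ZZ[\tau]]$. Since $d$ is square-free and $256d-27c^4$ is divisible by no square of an odd prime, an odd prime $p$ has $v_p(\Delta_g)\geq 2$ only when $p\mid d$; and $p=2$ has $v_2(\Delta_g)\geq 2$ only when $2\mid d$ or $4\mid(256d-27c^4)$. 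Every other prime dividing $\Delta_g$ divides it exactly once, so the displayed relation forces its contribution to the index to vanish. It therefore remains to run the Montes algorithm at the (finitely many) primes in these two families.

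For an odd prime $p\mid d$, reduction modulo $p$ gives $g_{c,d}(x)\equiv x^3(x+c)\pmod p$, and by the reasoning of Remark~\ref{sep} the separable factor $x+c$ contributes nothing; so we take $\phi(x)=x$. The $x$-adic development is simply $g_{c,d}(x)=d+cx^3+x^4$, so the relevant vertices are $(0,v_p(d))=(0,1)$, $(3,v_p(c))$, and $(4,0)$. If $p\nmid c$ the principal $x$-polygon is the single side from $(0,1)$ to $(3,0)$ of slope $-1/3$; if $p\mid c$ it is the single side from $(0,1)$ to $(4,0)$ of slope $-1/4$. In either case one checks there are no lattice points on or below the polygon that lie strictly above the $x$-axis and strictly to the right of the $y$-axis, so $\ind_x(g_{c,d})=0$, and the residual polynomial is linear, hence separable. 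Theorem~\ref{Thmofindex} then yields $v_p([\mathcal{O}_{\QQ(\tau)}:\ZZ[\tau]])=0$.

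For $p=2$ there are two regimes. If $2\mid d$, then (as $d$ is square-free) $2\|d$, and modulo $2$ we get $x^3(x+c)$ or $x^4$ according as $c$ is odd or even; the same $x$-Newton polygon computation as above (again a single negative side with a linear residual polynomial) gives $\ind_x(g_{c,d})=0$, so $v_2([\mathcal{O}_{\QQ(\tau)}:\ZZ[\tau]])=0$. (When $c$ is even this is just the observation that $g_{c,d}$ is Eisenstein at $2$.) If instead $2\nmid d$ and $4\mid(256d-27c^4)$, then since $4\mid 256d$ we must have $4\mid 27c^4$, hence $2\mid c$, so $g_{c,d}(x)\equiv(x+1)^4\pmod 2$ and we take $\phi(x)=x+1$. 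Expanding, the $(x+1)$-adic development has constant term $1-c+d$ and all of its other non-leading coefficients even, so the principal $(x+1)$-polygon is the single side from $(0,v_2(1-c+d))$ to $(4,0)$. The condition $\ind_{x+1}(g_{c,d})=0$ is precisely $v_2(1-c+d)=1$, and a direct check shows this holds exactly when $(c,d)$ is congruent to $(0,1)$ or $(2,3)$ in $\ZZ/4\ZZ\times\ZZ/4\ZZ$ — which is the hypothesis. The residual polynomial is again linear, so $v_2([\mathcal{O}_{\QQ(\tau)}:\ZZ[\tau]])=0$.

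Having shown $v_p([\mathcal{O}_{\QQ(\tau)}:\ZZ[\tau]])=0$ for every prime $p\mid\Delta_g$, we conclude $[\mathcal{O}_{\QQ(\tau)}:\ZZ[\tau]]=1$, so $\QQ(\tau)$ is monogenic with generator $\tau$. The main obstacle is the prime $2$: one must verify that in the regime $2\nmid d$, $4\mid(256d-27c^4)$ the $(x+1)$-Newton polygon really does collapse to a single side of slope $-1/4$ — equivalently, that the stated congruence conditions are exactly those forcing $v_2(1-c+d)=1$ — and that no stray vertex of the polygon spoils either the lattice-point count or the separability of the residual polynomial. Everything else is routine Newton-polygon bookkeeping, parallel to the $f_{a,b}$ argument in Theorem~\ref{A}.
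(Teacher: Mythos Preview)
Your proof is correct and follows essentially the same approach as the paper: the same discriminant/index relation disposes of the primes with $v_p(\Delta_g)\le 1$, and the Montes algorithm with $\phi=x$ (for $p\mid d$) and a lift of $x+1\pmod 2$ (for $2\nmid d$, $4\mid(256d-27c^4)$) handles the rest. The only cosmetic difference is that the paper takes the lift $\phi(x)=x-1$ at $p=2$ (giving constant term $c+d+1$) whereas you take $\phi(x)=x+1$ (giving $1-c+d$); since $c$ is even these differ by a multiple of $4$, so the condition $v_2(\cdot)=1$ is the same in both presentations.
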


\begin{proof}
Recall $\Delta_g=d^2(256d-27c^4)$. Let $p$ be a prime dividing $\Delta_g$. We will show $v_p([\mathcal{O}_{\QQ(\tau)}:\ZZ[\tau]])=0$. First, suppose $p\mid \left(256d-27c^4\right)$, but $p\dnd d$ and $p\neq 2$. By assumption, $v_p(256d-27c^4)=1$. Hence
$$1=v_p(\Delta_g)=v_p(\disc(\QQ(\tau)))+2v_p([\mathcal{O}_{\QQ(\tau)}:\ZZ[\tau]]).$$
Thus $v_p([\mathcal{O}_{\QQ(\tau)}:\ZZ[\tau]])=0$.

Now suppose $p\mid d$. Applying the Montes algorithm, we consider $g_{c,d}(x)$ modulo $p$. We have two cases:

{\bf Case 1:}  Suppose $p\mid c$. The reduction of $g_{c,d}(x)$ is simply $x^4$, so we only consider the irreducible factor $x$. Taking the lift $\phi(x)=x$, the principal $x$-polygon of $g_{c,d}(x)$ has one side, originating at $(0,1)$ and terminating at $(4,0)$. Thus $\ind_x(g_{c,d})=0$. The residual polynomial attached to this side is $y-\frac{d}{p}$, which is clearly separable. Thus, by Theorem \ref{Thmofindex}, $v_p([\mathcal{O}_{\QQ(\tau)}:\ZZ[\tau]])=0$.

{\bf Case 2:}  Suppose $p\dnd c$. Modulo $p$ we have
$$g_{c,d}(x)\equiv x^4+cx^3\equiv x^3(x+c).$$ 
We treat the irreducible factor $x$ exactly as above. Again, the principal $x$-polygon is one-sided and the residual polynomial is separable. We conclude $\ind_x(g_{c,d})=0$.

Considering the factor $x+c$, we note it is separable. From Remark \ref{sep} we see $\ind_{x+c}(g_{c,d})=0$ and the residual polynomial is separable. We apply Theorem \ref{Thmofindex} to see $v_p([\mathcal{O}_{\QQ(\tau)}:\ZZ[\tau]])=0$.

For the final scenario, suppose $4\mid\left(256d-27c^4\right)$ and $2\dnd d$. Modulo 2 we have
$$g_{c,d}(x)\equiv x^4+d\equiv (x-1)^4.$$
Beginning the Montes algorithm, the $(x-1)$-adic development is 
$$(x-1)^4+(c+4)(x-1)^3+(3c+6)(x-1)^2+(3c+4)(x-1)+c+d+1.$$
To ensure $\ind_{x-1}(g_{c,d})=0$, we need $v_2(c+d+1)=1$. One checks this is equivalent to the conditions given in the theorem statement. Finally, if $v_2(c+d+1)=1$ the residual polynomial is linear and hence separable. Thus, by Theorem \ref{Thmofindex}, $v_2([\mathcal{O}_{\QQ(\tau)}:\ZZ[\tau]])=0$.

Since we have covered all primes dividing the discriminant of $g_{c,d}$, we conclude $[\mathcal{O}_{\QQ(\tau)}:\ZZ[\tau]]=1$. Thus $\QQ(\tau)$ is monogenic and $\tau$ generates the ring of integers. 

\end{proof}

\section{Density}\label{Density}
In this section, we will show the families of monogenic $S_4$ fields defined by the polynomials $f_{b,b}(x)=x^4+bx+b$ and $g_{1,d}(x)=x^4+x^3+d$ with the conditions imposed in Theorem \ref{Gal} are infinite. In fact, we will give a lower bound on the density of each family. The families $f_{b,b}$ and $g_{1,d}$ are parametrized by $b$ and $d$ respectively. So, by \emph{density}, we mean the natural density of $b\in\ZZ$ or $d\in\ZZ$ yielding monogenic fields.

To begin with, it is well-known that the natural density of square-free integers is 
$$\dfrac{1}{\zeta(2)}=\dfrac{6}{\pi^2}\approx 60.79\%.$$
See \cite{Jia} for example. Now let $S(x;m,k)$ denote the number of square-free integers that do not exceed $x$ and are congruent to $m$ modulo $k$. We will also need a result of Prachar from \cite{Prachar}:

\begin{theorem}\label{Prachar}
$$S(x;m,k)\sim \dfrac{6x}{\pi^2k}\prod\limits_{p\mid k}\left(1-\dfrac{1}{p^2}\right)^{-1} \ \ \ \ \ (x\to \infty)$$
for $\gcd(m,k)=1$ and $k\leq x^{\frac{2}{3}-\epsilon}$. 
\end{theorem}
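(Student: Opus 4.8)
The plan is to count the square-free integers in the progression by the classical M\"obius-over-squares sieve together with the Chinese Remainder Theorem, truncating the M\"obius sum at a carefully chosen height so that its tail can be controlled by counting square roots modulo $k$. Writing $\mu^2(n)=\sum_{d^2\mid n}\mu(d)$ and interchanging the order of summation,
\[
S(x;m,k)=\sum_{\substack{n\le x\\ n\equiv m\,(k)}}\ \sum_{d^2\mid n}\mu(d)=\sum_{d\le\sqrt x}\mu(d)\,N_d,\qquad N_d:=\#\bigl\{n\le x:\ d^2\mid n,\ n\equiv m\!\!\pmod{k}\bigr\}.
\]
The first point is that $N_d=0$ unless $\gcd(d,k)=1$: the congruences $n\equiv 0\pmod{d^2}$ and $n\equiv m\pmod{k}$ force $\gcd(d^2,k)\mid m$, hence $\gcd(d^2,k)\mid\gcd(m,k)=1$. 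For $\gcd(d,k)=1$ the Chinese Remainder Theorem gives $N_d=\tfrac{x}{d^2k}+O(1)$.

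Next I would fix a parameter $y\le\sqrt x$ and split the sum at $d=y$. The part with $d\le y$ equals $\tfrac{x}{k}\sum_{d\le y,\ \gcd(d,k)=1}\tfrac{\mu(d)}{d^2}+O(y)$; completing the series and using the Euler factorization $\sum_{\gcd(d,k)=1}\tfrac{\mu(d)}{d^2}=\prod_{p\nmid k}\bigl(1-\tfrac1{p^2}\bigr)=\tfrac1{\zeta(2)}\prod_{p\mid k}\bigl(1-\tfrac1{p^2}\bigr)^{-1}$ produces exactly the claimed main term $\tfrac{6x}{\pi^2k}\prod_{p\mid k}\bigl(1-\tfrac1{p^2}\bigr)^{-1}$, with an error $O(y)+O\!\bigl(\tfrac{x}{ky}\bigr)$ coming from the $O(1)$'s and the tail of the series. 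Taking $y=\sqrt x$ here (no truncation) already gives error $O(\sqrt x)$ and hence the asymptotic whenever $k=o(\sqrt x)$; the reason to take $y$ smaller is to enlarge this range, at the cost of a now-nonempty tail $y<d\le\sqrt x$.

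The delicate part is that tail. By the triangle inequality its total contribution is at most $\#\bigl\{(d,e):\ d>y,\ d^2e\le x,\ d^2e\equiv m\pmod{k}\bigr\}$, where $\gcd(d,k)=1$ is forced and $e\le x/y^2$. I would fix such an $e$ (necessarily coprime to $k$) and count the admissible $d\in(y,\sqrt{x/e}\,]$, which satisfy $d^2\equiv me^{-1}\pmod{k}$: there are at most $\rho(k)\bigl(1+\tfrac1k\sqrt{x/e}\bigr)$ of them, where $\rho(k)$ is a uniform bound for the number of square roots of a unit modulo $k$. Since $\rho$ is multiplicative with $\rho(p^a)\le 4$, one has $\rho(k)\ll_\epsilon k^{\epsilon}$. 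Summing over $e\le x/y^2$ and using $\sum_{e\le E}e^{-1/2}\ll\sqrt E$ bounds this contribution by $O\!\bigl(\tfrac{\rho(k)x}{y^2}+\tfrac{\rho(k)x}{ky}\bigr)$.

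Assembling the estimates, $S(x;m,k)=\tfrac{6x}{\pi^2k}\prod_{p\mid k}\bigl(1-\tfrac1{p^2}\bigr)^{-1}+O\!\bigl(y+\tfrac{\rho(k)x}{ky}+\tfrac{\rho(k)x}{y^2}\bigr)$, and the choice $y=(\rho(k)x)^{1/3}$ equalizes the first and third error terms, leaving an error $O\!\bigl((\rho(k)x)^{1/3}+\rho(k)^{2/3}x^{2/3}/k\bigr)$. Because the main term has order $x/k$ and $\rho(k)\ll_\epsilon k^{\epsilon}$, this is $o(x/k)$ precisely in the range $k\le x^{2/3-\epsilon}$, which is the assertion. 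The main obstacle is exactly this final balancing: the error has to be pushed below $x/k$ uniformly in $k$, and what makes the exponent $2/3$ rather than the naive $1/2$ work out is (i) recognizing the tail as a count of lattice points on the region $d^2e\le x$ and (ii) bounding the number of square roots of $me^{-1}$ modulo $k$ by $k^{\epsilon}$; the CRT count, the Euler product, and the elementary sums over $d$ and $e$ are all routine. (This essentially reconstructs Prachar's argument; one may instead cite it as a black box.)
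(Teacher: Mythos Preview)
The paper does not prove this statement at all: Theorem~\ref{Prachar} is quoted verbatim from Prachar and used as a black box in the density estimates of Section~\ref{Density}. Your proposal, by contrast, supplies an actual proof, and it is correct; in fact it is essentially Prachar's own argument. The M\"obius-squared sieve, the CRT reduction forcing $\gcd(d,k)=1$, the truncation at $y$, and the key step of bounding the tail by counting square roots of $me^{-1}$ modulo $k$ (with $\rho(k)\ll_\epsilon k^\epsilon$) are exactly the ingredients Prachar uses, and your choice $y=(\rho(k)x)^{1/3}$ balances the errors to recover the range $k\le x^{2/3-\epsilon}$. Your closing parenthetical already acknowledges this, so nothing further is needed beyond noting that, relative to the paper, you have replaced a citation with a self-contained argument.
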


Before the proof, we recall the second claim of Theorem \ref{f1}:
\begin{theorem}
The density of monogenic $S_4$ fields within the number fields defined by $f_{b,b}(x)=x^4+bx+b$ is at least $\dfrac{51-4\pi^2}{4\pi^2}\approx 29.18\%$. 
\end{theorem}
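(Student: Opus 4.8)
The plan is to restrict to the subfamily of $b\in\ZZ$ for which both $b$ and $256-27b$ are square-free, to show that this subfamily already consists entirely of monogenic $S_4$ fields, and then to bound its density from below by a union bound.

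\emph{Reduction to a square-free condition.} First I would fix $b\in\ZZ$ with $b$ and $256-27b$ square-free and apply Theorem \ref{fmon} with $a=b$. Since $b$ is square-free, every prime $p$ dividing $\gcd(256b^3,27b^4)$ divides $b$ (otherwise $p\mid\gcd(256,27)=1$), hence divides $a=b$, and satisfies $p^2\nmid b$; thus $p$ meets condition~(1) of Theorem \ref{fmon}, and conditions~(2) and~(3) are never invoked. A short valuation computation gives $\gcd(256b^3,27b^4)=2^{4v_2(b)}\,(b')^{3}$, where $b'$ denotes the odd part of $b$, so that
$$\frac{256b^3-27b^4}{\gcd(256b^3,27b^4)}=\frac{256-27b}{2^{v_2(b)}}.$$
This quotient is square-free: if $b$ is odd it equals $256-27b$ itself, and if $b=2m$ with $m$ odd then $256-27b=2(128-27m)$ with $128-27m$ odd, so square-freeness of $256-27b$ descends to $128-27m$. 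Hence Theorem \ref{fmon} shows $\theta$ generates the ring of integers of $\QQ(\theta)$. Since $256-27\cdot 3=5^2\cdot 7$ and $256-27\cdot 5=11^2$ are not square-free, our hypotheses force $b\neq 3,5$, so Theorem \ref{Gal} shows $f_{b,b}$ is irreducible with Galois group $S_4$. Thus every $b$ with $b$ and $256-27b$ square-free yields a monogenic $S_4$ field, and it suffices to bound below the density of such $b$.

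\emph{Density estimate.} For the density I would use a Bonferroni bound. For each $N\ge 1$, the number of $b\in[-N,N]$ for which $b$ and $256-27b$ are both square-free is at least $(2N+1)$ minus the number of $b\in[-N,N]$ with $b$ not square-free minus the number with $256-27b$ not square-free. Dividing by $2N+1$ and letting $N\to\infty$, the lower density of the good set is at least $\delta_1+\delta_2-1$, where $\delta_1$ is the density of square-free integers and $\delta_2$ is the density of $b\in\ZZ$ with $256-27b$ square-free. The classical result (see \cite{Jia}) gives $\delta_1=6/\pi^2$. For $\delta_2$, I would note that $b\mapsto 256-27b$ maps $\ZZ$ bijectively onto the residue class $13\bmod 27$, so $\delta_2$ is the relative density of square-free integers in that class; Theorem \ref{Prachar} with $m=13$, $k=27$ (the hypothesis $k\le x^{2/3-\epsilon}$ being automatic as $k$ is constant) gives
$$S(x;13,27)\sim\frac{6x}{27\pi^2}\left(1-\frac19\right)^{-1}=\frac{x}{4\pi^2}\qquad(x\to\infty),$$
and since there are $\sim x/27$ integers of that class below $x$, one gets $\delta_2=27/(4\pi^2)$. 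Combining,
$$\delta_1+\delta_2-1=\frac{6}{\pi^2}+\frac{27}{4\pi^2}-1=\frac{24+27-4\pi^2}{4\pi^2}=\frac{51-4\pi^2}{4\pi^2}\approx 29.18\%.$$

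\emph{The main obstacle.} The only real content is the reduction step: verifying that for $a=b$ with $b$ square-free the full list of hypotheses of Theorem \ref{fmon} collapses to the square-freeness of $256-27b$ -- in particular that condition~(1) by itself handles every prime dividing $\gcd(256b^3,27b^4)$, and that square-freeness of $256-27b$ forces the quotient $\frac{256-27b}{2^{v_2(b)}}$ to be square-free as well. The density half is routine; the one point of care is $\delta_2$: since $256-27b$ can be negative, one should treat $b>0$ and $b<0$ separately (the negative range landing in the class $14\bmod 27$), each contributing relative density $27/(4\pi^2)$, and one must remember that the density is taken over all of $\ZZ$, not just $\NN$.
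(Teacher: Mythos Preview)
Your proof is correct and follows essentially the same approach as the paper: reduce to the set of $b$ with both $b$ and $256-27b$ square-free, then use Prachar's result and a union bound to get $\dfrac{6}{\pi^2}+\dfrac{27}{4\pi^2}-1=\dfrac{51-4\pi^2}{4\pi^2}$. Your reduction step is in fact more careful than the paper's --- you explicitly verify that condition~(1) of Theorem~\ref{fmon} handles every prime dividing $\gcd(256b^3,27b^4)$ when $b$ is square-free, you compute the quotient $\dfrac{256-27b}{2^{v_2(b)}}$ and check its square-freeness, and you observe that $b=3,5$ are automatically excluded --- whereas the paper simply asserts the reduction; similarly, your remark about handling negative values of $256-27b$ addresses a sign issue the paper leaves implicit.
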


\begin{proof}
From Theorem \ref{A}, to show that there are infinitely many monogenic fields defined by a root of $f_{b,b}$, it suffices to show there are infinitely many square-free $b$ such that $256-27b$ is square-free. The density of square-free $b$ is $\dfrac{6}{\pi^2}$. By Theorem \ref{Prachar}, the density of square-free numbers congruent to $256$ modulo $27$ among numbers congruent to $256$ modulo $27$ is 
$$\dfrac{6}{\pi^2}\left(1-\dfrac{1}{9}\right)^{-1}=\dfrac{27}{4\pi^2}.$$
Thus, at worst, the density of monogenic fields in this family is 
$$\dfrac{6}{\pi^2}-\left(1-\dfrac{27}{4\pi^2}\right)=\dfrac{51-4\pi^2}{4\pi^2}\approx 29.18\%.$$
\end{proof}

As above, before beginning the proof, we recall the second claim of Theorem \ref{g1}:
\begin{theorem}
The density of monogenic $S_4$ fields within the number fields defined by $g_{1,d}(x)=x^4+x^3+d$ is at least $\dfrac{14-\pi^2}{\pi^2}\approx 41.85\%$.
\end{theorem}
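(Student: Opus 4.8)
The plan is to mirror the density computation already carried out for $f_{b,b}$. By Theorem \ref{B} (together with Theorem \ref{Gal} for the $S_4$ conclusion), a root $\tau$ of $g_{1,d}(x)=x^4+x^3+d$ generates a monogenic $S_4$ quartic field whenever $d$ and $256d-27$ are both square-free and $d\neq-2$. The first step is to check that, for this family, the hypotheses of Theorem \ref{gmon} collapse to exactly these conditions: since $256d\equiv0\pmod{4}$ and $-27\equiv1\pmod{4}$, we have $256d-27\equiv1\pmod{4}$ for every $d$, so $256d-27$ is automatically odd, $4\nmid(256d-27)$, and ``$256d-27$ not divisible by the square of an odd prime'' is simply the condition that $256d-27$ be square-free. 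The single value $d=-2$ has density zero and may be ignored. Hence it suffices to bound from below the natural density of
$$\Scal=\{\,d\in\ZZ\ :\ d\text{ and }256d-27\text{ are square-free}\,\}.$$

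Next I would compute the two ingredient densities. The density of square-free $d$ is $\dfrac{6}{\pi^2}$. For the second event, observe that as $d$ ranges over $\ZZ$ the integer $256d-27$ ranges bijectively over the residue class $-27 \bmod 256$, and $\gcd(-27,256)=1$. Since the modulus $k=256$ is a fixed constant (so $k\leq x^{2/3-\epsilon}$ for all sufficiently large $x$), Theorem \ref{Prachar} applies and yields that the density, relative to this residue class, of its square-free members is
$$\dfrac{6}{\pi^2}\prod_{p\mid256}\left(1-\dfrac{1}{p^2}\right)^{-1}=\dfrac{6}{\pi^2}\left(1-\dfrac14\right)^{-1}=\dfrac{8}{\pi^2}.$$
Pulling this back along the bijection $d\mapsto256d-27$, the density of $d\in\ZZ$ for which $256d-27$ is square-free is $\dfrac{8}{\pi^2}$.

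Finally I would conclude with a union (Bonferroni) bound. The set of $d$ for which $d$ is not square-free has density $1-\dfrac{6}{\pi^2}$, and the set for which $256d-27$ is not square-free has density $1-\dfrac{8}{\pi^2}$; therefore the density of $\Scal$ is at least
$$1-\left(1-\dfrac{6}{\pi^2}\right)-\left(1-\dfrac{8}{\pi^2}\right)=\dfrac{14}{\pi^2}-1=\dfrac{14-\pi^2}{\pi^2}\approx41.85\%,$$
which is the asserted bound. I do not expect any genuine obstacle: the argument reduces entirely to the count of square-free integers in an arithmetic progression furnished by Theorem \ref{Prachar}, and because only a lower bound is required the crude union bound suffices, so no correlation between the two square-freeness events needs to be addressed (in any case $\gcd(d,256d-27)=\gcd(d,27)$, so the conditions interact only at $p=3$). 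The one point that deserves a moment's attention is the initial reduction --- verifying that the mod-$4$ clause of Theorem \ref{gmon} is vacuous here because $256d-27\equiv1\pmod{4}$ always --- which is precisely what permits the full density $\dfrac{8}{\pi^2}$ for the second condition rather than only a proper fraction of it.
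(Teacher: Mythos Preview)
Your proposal is correct and follows essentially the same approach as the paper: reduce to square-freeness of $d$ and of $256d-27$, use Prachar's theorem for the density of square-free integers in the residue class $-27\bmod 256$ to get $\dfrac{8}{\pi^2}$, and then apply a union bound to obtain $\dfrac{14-\pi^2}{\pi^2}$. Your explicit verification that $256d-27\equiv 1\pmod 4$ (so the mod-$4$ clause of Theorem \ref{gmon} is vacuous) is a detail the paper leaves implicit, but otherwise the arguments are identical.
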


\begin{proof} 
From Theorem \ref{B}, to show that there are infinitely many monogenic fields defined by a root of $g_{1,d}$, it suffices to show there are infinitely many square-free $d$ such that $256d-27$ is square-free. The density of square-free $d$ is $\dfrac{6}{\pi^2}$. By Theorem \ref{Prachar}, the density of square-free numbers congruent to $27$ modulo $256$ among numbers congruent to $27$ modulo $256$ is 
$$\dfrac{6}{\pi^2}\left(1-\dfrac{1}{4}\right)^{-1}=\dfrac{24}{3\pi^2}.$$
Thus, at worst, the density of monogenic fields in this family is 
$$\dfrac{6}{\pi^2}-\left(1-\dfrac{24}{3\pi^2}\right)=\dfrac{14-\pi^2}{\pi^2}\approx 41.85\%.$$
\end{proof}

\begin{remark}\label{data}
As above, let $\theta$ be a root of $f_{b,b}(x)=x^4+bx+b$ and $\tau$ a root of $g_{1,d}(x)=x^4+x^3+d$. Computationally, it appears that 55.3\% of fields of the form $\QQ(\theta)$ have $\theta$ as a generator of $\mathcal{O}_{\QQ(\theta)}$. Likewise, it appears that 55.3\% of fields of the form $\QQ(\tau)$ have $\tau$ as a generator of $\mathcal{O}_{\QQ(\tau)}$. If $\QQ(\tau)$ is monogenic, it seems that $\tau$ is almost always a generator of the ring of integers, since $\QQ(\tau)$ appears to be monogenic about 55.3\% of the time. However, $\QQ(\theta)$ seems to be monogenic about 58.7\% of the time. Thus there are some cases where $\QQ(\theta)$ is monogenic, but $\theta$ does not generate the ring of integers. We obtained these heuristics using SageMath \cite{Sage} and testing $b$ and $d$ between -2,500,000 and 2,500,000. 
\end{remark}

\bibliography{Bib}

\begin{thebibliography}{33}
\providecommand{\natexlab}[1]{#1}
\providecommand{\url}[1]{\texttt{#1}}
\expandafter\ifx\csname urlstyle\endcsname\relax
  \providecommand{\doi}[1]{doi: #1}\else
  \providecommand{\doi}{doi: \begingroup \urlstyle{rm}\Url}\fi

\bibitem[B\'erczes et~al.(2013)B\'erczes, Evertse, and Gy\H{o}ry]{BEG}
A.~B\'erczes, J.-H. Evertse, and K.~Gy\H{o}ry.
\newblock Multiply monogenic orders.
\newblock \emph{Ann. Sc. Norm. Super. Pisa Cl. Sci. (5)}, 12\penalty0
  (2):\penalty0 467--497, 2013.
\newblock ISSN 0391-173X.

\bibitem[{Bhargava} et~al.(2016){Bhargava}, {Shankar}, and {Wang}]{BSW}
M.~{Bhargava}, A.~{Shankar}, and X.~{Wang}.
\newblock {Squarefree values of polynomial discriminants I}.
\newblock \emph{ArXiv e-prints}, Nov. 2016.
\newblock URL \url{https://arxiv.org/abs/1611.09806}.

\bibitem[Brookfield(2007)]{LostArt}
G.~Brookfield.
\newblock Factoring quartic polynomials: a lost art.
\newblock \emph{Mathematics Magazine}, 80\penalty0 (1):\penalty0 67--70, 2007.
\newblock URL \url{http://www.jstor.org/stable/27642994}.

\bibitem[Conrad()]{KCBlurb}
K.~Conrad.
\newblock Galois groups of cubics and quartics (not in characteristic 2).
\newblock URL
  \url{http://www.math.uconn.edu/~kconrad/blurbs/galoistheory/cubicquartic.pdf}.

\bibitem[Dedekind(1878)]{Dedekind}
R.~Dedekind.
\newblock \"{U}ber den {Z}usammenhang zwischen der {T}heorie der {I}deale und
  der {T}heorie der h\"{o}heren {K}ongruenzen.
\newblock \emph{G\"{o}tt. Abhandlungen}, pages 1--23, 1878.

\bibitem[El~Fadil et~al.(2012)El~Fadil, Montes, and Nart]{EFMN}
L.~El~Fadil, J.~Montes, and E.~Nart.
\newblock Newton polygons and {$p$}-integral bases of quartic number fields.
\newblock \emph{J. Algebra Appl.}, 11\penalty0 (4):\penalty0 1250073, 33, 2012.
\newblock ISSN 0219-4988.
\newblock \doi{10.1142/S0219498812500739}.
\newblock URL \url{http://dx.doi.org/10.1142/S0219498812500739}.

\bibitem[Funakura(1984)]{Funa}
T.~Funakura.
\newblock On integral bases of pure quartic fields.
\newblock \emph{Math. J. Okayama Univ.}, 26:\penalty0 27--41, 1984.
\newblock ISSN 0030-1566.

\bibitem[Ga\'al(1993)]{Gaalquartic}
I.~Ga\'al.
\newblock Power integral bases in orders of families of quartic fields.
\newblock \emph{Publ. Math. Debrecen}, 42\penalty0 (3-4):\penalty0 253--263,
  1993.
\newblock ISSN 0033-3883.

\bibitem[Ga\'al(2002)]{GaalDiophantine}
I.~Ga\'al.
\newblock \emph{Diophantine equations and power integral bases}.
\newblock Birkh\"auser Boston, Inc., Boston, MA, 2002.
\newblock ISBN 0-8176-4271-4.
\newblock \doi{10.1007/978-1-4612-0085-7}.
\newblock URL \url{https://doi.org/10.1007/978-1-4612-0085-7}.
\newblock New computational methods.

\bibitem[Ga\'{a}l and Jadrijevi\'{c}(2017)]{GaalJadrijevic}
I.~Ga\'{a}l and B.~Jadrijevi\'{c}.
\newblock Determining elements of minimal index in an infinite family of
  totally real bicyclic biquadratic number fields.
\newblock \emph{JP J. Algebra Number Theory Appl.}, 39:\penalty0 307--326, 05
  2017.

\bibitem[Ga\'{a}l and Remete(2014)]{GaalRemete}
I.~Ga\'{a}l and L.~Remete.
\newblock Binomial thue equations and power integral bases in pure quartic
  fields.
\newblock \emph{JP J. Algebra Number Theory Appl.}, 32:\penalty0 49--61, 02
  2014.

\bibitem[Ga\'al and Szab\'o(2012)]{GaalSzabo}
I.~Ga\'al and T.~Szab\'o.
\newblock Power integral bases in parametric families of biquadratic fields.
\newblock \emph{JP J. Algebra Number Theory Appl.}, 24\penalty0 (1):\penalty0
  105--114, 2012.
\newblock ISSN 0972-5555.

\bibitem[Ga\'al et~al.(1991{\natexlab{a}})Ga\'al, Peth\H{o}, and Pohst]{GPPI}
I.~Ga\'al, A.~Peth\H{o}, and M.~Pohst.
\newblock On the resolution of index form equations in biquadratic number
  fields. {I}, {II}.
\newblock \emph{J. Number Theory}, 38\penalty0 (1):\penalty0 18--34, 35--51,
  1991{\natexlab{a}}.
\newblock ISSN 0022-314X.
\newblock \doi{10.1016/0022-314X(91)90090-X}.
\newblock URL \url{https://doi.org/10.1016/0022-314X(91)90090-X}.

\bibitem[Ga\'al et~al.(1991{\natexlab{b}})Ga\'al, Peth\H{o}, and
  Pohst]{GPPindexform}
I.~Ga\'al, A.~Peth\H{o}, and M.~Pohst.
\newblock On the resolution of index form equations.
\newblock In S.~M. Watt, editor, \emph{Proceedings of the 1991 International
  Symposium on Symbolic and Algebraic Computation}, pages 185--186, New York,
  NY, USA, 1991{\natexlab{b}}. ACM.
\newblock ISBN 0-89791-437-6.

\bibitem[Ga\'al et~al.(1991{\natexlab{c}})Ga\'al, Peth\H{o}, and
  Pohst]{GaalPethoPohst}
I.~Ga\'al, A.~Peth\H{o}, and M.~Pohst.
\newblock On the indices of biquadratic number fields having {G}alois group
  {$V_4$}.
\newblock \emph{Arch. Math. (Basel)}, 57\penalty0 (4):\penalty0 357--361,
  1991{\natexlab{c}}.
\newblock ISSN 0003-889X.
\newblock \doi{10.1007/BF01198960}.
\newblock URL \url{https://doi.org/10.1007/BF01198960}.

\bibitem[Gassert(2017)]{Alden}
T.~A. Gassert.
\newblock A note on the monogeneity of power maps.
\newblock \emph{Albanian J. Math.}, 11\penalty0 (1):\penalty0 3--12, 2017.
\newblock ISSN 1930-1235.

\bibitem[{Gassert} et~al.(2017){Gassert}, {Smith}, and {Stange}]{GSS}
T.~A. {Gassert}, H.~{Smith}, and K.~E. {Stange}.
\newblock {A family of monogenic $S_4$ quartic fields arising from elliptic
  curves}.
\newblock \emph{ArXiv e-prints}, Aug. 2017.
\newblock URL \url{https://arxiv.org/abs/1708.03953}.

\bibitem[Gras(1981)]{GrasZbases}
M.-N. Gras.
\newblock {${\bf Z}$}-bases d'entiers {$1,$} {$\theta ,$} {$\theta ^{2},$}
  {$\theta ^{3}$}\ dans les extensions cycliques de degr\'e {$4$}\ de {${\bf
  Q}$}.
\newblock In \emph{Number theory, 1979--1980 and 1980--1981}, Publ. Math. Fac.
  Sci. Besan\c{c}on, pages Exp. No. 6, 14. Univ. Franche-Comt\'e, Besan\c{c}on,
  1981.

\bibitem[Gras(1986)]{Gras}
M.-N. Gras.
\newblock Condition n\'ecessaire de monog\'en\'eit\'e de l'anneau des entiers
  d'une extension ab\'elienne de {${\bf Q}$}.
\newblock In \emph{S\'eminaire de th\'eorie des nombres, {P}aris 1984--85},
  volume~63 of \emph{Progr. Math.}, pages 97--107. Birkh\"auser Boston, Boston,
  MA, 1986.

\bibitem[Gras and Tano\'e(1995)]{tangras}
M.-N. Gras and F.~Tano\'e.
\newblock Corps biquadratiques monog\`enes.
\newblock \emph{Manuscripta Math.}, 86\penalty0 (1):\penalty0 63--79, 1995.
\newblock ISSN 0025-2611.
\newblock \doi{10.1007/BF02567978}.
\newblock URL \url{http://dx.doi.org/10.1007/BF02567978}.

\bibitem[Gu\`ardia et~al.(2015)Gu\`ardia, Montes, and Nart]{gmn15}
J.~Gu\`ardia, J.~Montes, and E.~Nart.
\newblock Higher {N}ewton polygons and integral bases.
\newblock \emph{J. Number Theory}, 147:\penalty0 549--589, 2015.
\newblock ISSN 0022-314X.
\newblock \doi{10.1016/j.jnt.2014.07.027}.
\newblock URL
  \url{http://0-dx.doi.org.libraries.colorado.edu/10.1016/j.jnt.2014.07.027}.

\bibitem[Halter-Koch and Tichy(2000)]{Hasse}
F.~Halter-Koch and R.~F. Tichy, editors.
\newblock \emph{Algebraic number theory and {D}iophantine analysis}, 2000.
  Walter de Gruyter \& Co., Berlin.
\newblock ISBN 3-11-016304-7.
\newblock \doi{10.1515/9783110801958}.
\newblock URL \url{https://doi.org/10.1515/9783110801958}.

\bibitem[Huard et~al.(1995)Huard, Spearman, and Williams]{HSW}
J.~G. Huard, B.~K. Spearman, and K.~S. Williams.
\newblock Integral bases for quartic fields with quadratic subfields.
\newblock \emph{J. Number Theory}, 51\penalty0 (1):\penalty0 87--102, 1995.
\newblock ISSN 0022-314X.
\newblock \doi{10.1006/jnth.1995.1036}.
\newblock URL \url{http://dx.doi.org/10.1006/jnth.1995.1036}.

\bibitem[Jadrijevi\'c(2009)]{Jadrijevic}
B.~Jadrijevi\'c.
\newblock Solving index form equations in two parametric families of
  biquadratic fields.
\newblock \emph{Math. Commun.}, 14\penalty0 (2):\penalty0 341--363, 2009.
\newblock ISSN 1331-0623.

\bibitem[Jadrijevi\'c(2015)]{Jadrijevic1}
B.~Jadrijevi\'c.
\newblock On elements with index of the form {$2^a3^b$} in a parametric family
  of biquadratic fields.
\newblock \emph{Glas. Mat. Ser. III}, 50(70)\penalty0 (1):\penalty0 43--63,
  2015.
\newblock ISSN 0017-095X.
\newblock \doi{10.3336/gm.50.1.05}.
\newblock URL \url{https://doi.org/10.3336/gm.50.1.05}.

\bibitem[Jia(1993)]{Jia}
C.~H. Jia.
\newblock The distribution of square-free numbers.
\newblock \emph{Sci. China Ser. A}, 36\penalty0 (2):\penalty0 154--169, 1993.
\newblock ISSN 1001-6511.

\bibitem[Jones and Phillips(2018)]{JonesPhillips}
L.~Jones and T.~Phillips.
\newblock Infinite families of monogenic trinomials and their {G}alois groups.
\newblock \emph{Internat. J. Math.}, 29\penalty0 (5):\penalty0 1850039, 11,
  2018.
\newblock ISSN 0129-167X.
\newblock \doi{10.1142/S0129167X18500398}.
\newblock URL \url{https://doi.org/10.1142/S0129167X18500398}.

\bibitem[Kable(1999)]{Kable}
A.~C. Kable.
\newblock Power bases in dihedral quartic fields.
\newblock \emph{J. Number Theory}, 76\penalty0 (1):\penalty0 120--129, 1999.
\newblock ISSN 0022-314X.
\newblock \doi{10.1006/jnth.1998.2350}.
\newblock URL \url{http://dx.doi.org/10.1006/jnth.1998.2350}.

\bibitem[Kappe and Warren(1989)]{GalQuar}
L.-C. Kappe and B.~Warren.
\newblock An elementary test for the {G}alois group of a quartic polynomial.
\newblock \emph{Amer. Math. Monthly}, 96\penalty0 (2):\penalty0 133--137, 1989.
\newblock ISSN 0002-9890.
\newblock URL \url{https://doi.org/10.2307/2323198}.

\bibitem[Olajos(2005)]{Olajos}
P.~Olajos.
\newblock Power integral bases in the family of simplest quartic fields.
\newblock \emph{Experiment. Math.}, 14\penalty0 (2):\penalty0 129--132, 2005.
\newblock ISSN 1058-6458.
\newblock URL \url{http://projecteuclid.org/euclid.em/1128100125}.

\bibitem[Prachar(1958)]{Prachar}
K.~Prachar.
\newblock \"{U}ber die kleinste quadratfreie {Z}ahl einer arithmetischen
  {R}eihe.
\newblock \emph{Monatsh. Math.}, 62:\penalty0 173--176, 1958.
\newblock URL \url{https://doi.org/10.1007/BF01301288}.

\bibitem[Spearman(2006)]{spear}
B.~K. Spearman.
\newblock Monogenic {$A_4$} quartic fields.
\newblock \emph{Int. Math. Forum}, 1\penalty0 (37-40):\penalty0 1969--1974,
  2006.
\newblock ISSN 1312-7594.
\newblock \doi{10.12988/imf.2006.06174}.
\newblock URL \url{http://dx.doi.org/10.12988/imf.2006.06174}.

\bibitem[{The Sage Developers}(2017)]{Sage}
{The Sage Developers}.
\newblock \emph{{S}ageMath, the {S}age {M}athematics {S}oftware {S}ystem
  ({V}ersion 8.1)}, 2017.
\newblock \url{http://www.sagemath.org}.

\end{thebibliography}
\bibliographystyle{abbrvnat}

\end{document}